\newtheorem{theorem}{{Theorem}}
\newtheorem{lemma}[theorem]{{Lemma}}
\theoremstyle{remark}
\newtheorem*{remark}{Remark}
\newcommand{\F}{\ensuremath{\mathbb F}}
\newcommand{\Z}{\ensuremath{\mathbb Z}}
\newcommand{\cO}{\mathcal{O}}
\newcommand{\Q}{\ensuremath{\mathbb Q}}
\newcommand{\cG}{\ensuremath{\mathcal G}}
\newcommand{\fp}{\mathfrak{p}}
\DeclareMathOperator{\End}{\mathrm{End}}
\DeclareMathOperator{\tr}{\mathrm{tr}}
\DeclareMathOperator{\Nrd}{\mathrm{Nrd}}
\DeclareMathOperator{\Trd}{\mathrm{Trd}}
\begin{document}

\title[Neighborhood  in the isogeny graph]{Neighborhood of  the supersingular elliptic curve isogeny graph  at $j=0$ and  $1728$}
\author{Songsong Li, Yi Ouyang, Zheng Xu}
\address{Wu Wen-Tsun Key Laboratory of Mathematics,  School of Mathematical Sciences, University of Science and Technology of China, Hefei, Anhui 230026, China}
\email{songsli@mail.ustc.edu.cn}
\email{yiouyang@ustc.edu.cn}
\email{xuzheng1@mail.ustc.edu.cn}
\subjclass[2010]{11G20, 11G15, 14G15, 14H52, 94A60}
\keywords{Supersingular elliptic curves over finite fields, Isogeny graph}

\date{}
 \maketitle

\begin{abstract}
We describe the neighborhood of the vertex $[E_0]$ (resp. $[E_{1728}]$) in the $\ell$-isogeny graph $\cG_\ell(\F_{p^2}, -2p)$ of supersingular elliptic curves over the finite field $\F_{p^2}$ when  $p>3\ell^2$ (resp. $p>4\ell^2$) with $E_0: y^2=x^3+1$ (resp. $E_{1728}: y^2=x^3+x$)  supersingular.
\end{abstract}

\section{Introduction and Main results}
Elliptic curves over finite fields play an important role in cryptography. It is well-known that elliptic curves defined over finite fields  can be classified into two types: ordinary and supersingular. If the elliptic curve $E$ is ordinary, the endomorphism ring of $E$ is an order of an imaginary quadratic field. If $E$ is supersingular, the endomorphism ring of $E$ is a maximal order of a quaternion algebra.
Computing the endomorphism rings and computing the isogenies of elliptic curves over finite fields are interesting problems in number theory and also has applications in cryptography. Stolbunov \cite{s} proposed a Diffie-Hellman type system based on the difficulty of computing isogenies between ordinary elliptic curves.  Cryptosystems based on the hardness of computing the endomorphism rings and isogenies of supersingular elliptic curves were proposed in \cite{jf}. Thus, it is important to find an explicit isogeny between two elliptic curves.

The efficient method to find explicit isogenies between supersingular elliptic curves is to use the isogeny graph, which is a Ramanujan graph introduced in \cite{cflm}. Childs, Jao and Soukharev gave an algorithm to compute ordinary elliptic curve isogenies in quantum subexponential time in \cite{cjs}. For supersingular elliptic curves defined over $\F_p$, from  \cite{dg, bjs}, there is also a subexponential time algorithm to solve this problem.

However, for supersingular elliptic curves defined over $\F_{p^2}$, it is hard to  compute the endomorphism rings or isogenies of the curves.  Let $\ell$ be a prime different from $p$. Here we recall the definition of
the isogeny graph $\cG_\ell(\F_{p^2})$ over $\F_{p^2}$ by Adj et al.~\cite{aam}. A vertex in the graph is an $\F_{p^2}$-isomorphism class $[E]$ of supersingular elliptic curves defined over $\F_{p^2}$. Let $[E_1]=[E_1']$, $[E_2]=[E_2']$ be two vertices in $\cG_\ell(\F_{p^2})$, let $\phi_1:\ E_1 \rightarrow E_2$ and $\phi_2: \ E'_1 \rightarrow E'_2$ be two $\ell$-degree $\F_{p^2}$-isogenies. We say that $\phi_1$ and $\phi_2$ are equivalent if there exist isomorphisms $\rho_1: E_1\rightarrow E_1'$ and $\rho_2: E_2\rightarrow E_2'$ such that $\phi_2\rho_1
=\rho_2 \phi_1$. Then an edge in the graph is an equivalent class of $\ell$-isogenies. If replacing the field of definition $\F_{p^2}$ of the curves and isogenies by the algebraic closure $\overline{\F}_p$ of $\F_p$, we get the definition of the isogeny graph $\cG_\ell(\overline{\F}_p)$. Note that for $E$  supersingular over $\F_{p^2}$, the trace of Frobenius $\pi=(x\mapsto x^{p^2})$ on the Tate module of $E$ must be $0$, $\pm p$ or $\pm 2p$.
For $t\in \{0,\pm p,\pm 2p\}$, let $\cG_\ell(\F_{p^2}, t)$ be the subgraph of $\cG_\ell(\F_{p^2})$ consisting of vertices $[E]$ with Frobenius trace $t$ and the adjacent edges.

Adj et al.\cite{aam} described clearly the subgraphs $\cG_\ell(\F_{p^2}, 0)$ and  $\cG_\ell(\F_{p^2}, \pm p)$. However, more work needs to be done when $t=\pm 2p$. Adj et al. proved the following key result in \cite[Theorem 6]{aam} and \cite [page10 line 24] {aam}:
 \begin{equation} \cG_\ell(\F_{p^2},2p) \cong \cG_\ell(\F_{p^2},-2p)\cong \cG_\ell(\overline{\F}_p).
 \end{equation}
Hence to study the neighborhood of a vertex $[E]$ in $\cG_\ell(\F_{p^2},\pm 2p)$, it suffices to study its neighborhood in $ \cG_\ell(\overline{\F}_p)$. Then tools such as Deuring's Correspondence Theorem can be used.

For $p>3$, there are two special supersingular elliptic curves over $\F_{p^2}$ with trace $-2p$:  \[ E_0: y^2=x^3+1 \ \text{when}\ p\equiv 2\bmod 3\]
with $j$ invariant $0$ and
 \[ E_{1728}: y^2=x^3+x \ \text{when}\ p\equiv 3\bmod 4 \]
with $j$-invariant $1728$. Then Adj et al.\cite[Theorems 10 and 12]{aam} and Ouyang-Xu \cite{OX} proved the following results about the loops on the vertices $[E_{1728}]$ and $[E_{0}]$ in the supersingular elliptic curves graph $\cG_\ell(\F_{p^2},-2p)$.
\begin{theorem} \label{theorem:loop} Suppose $\ell>3$.

$(1)$ If $p\equiv 3\bmod{4}$ and $p>4\ell$,  there are either $2$ or $0$ loops on $[E_{1728}]$ if $\ell\equiv 1\bmod{4}$ or $3\bmod 4$ respectively.

$(2)$ If $p\equiv 2\bmod{3}$ and $p>3\ell$, there are either $2$ or $0$ loops on $[E_{0}]$ if $\ell\equiv 1\bmod{3}$ or $2\bmod 3$ respectively.
\end{theorem}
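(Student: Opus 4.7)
The plan is to translate loops at $[E_{1728}]$ or $[E_0]$ into equivalence classes of endomorphisms of reduced norm $\ell$, show that such endomorphisms must lie in the CM subring of $\End(E)$ when $p$ is large enough, and then count via classical factorization in an imaginary quadratic ring.

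By the isomorphism $\cG_\ell(\F_{p^2},-2p)\cong\cG_\ell(\overline{\F}_p)$ recalled above, a loop at $[E]$ is an equivalence class of $\ell$-isogenies $\phi\colon E\to E$ under $\phi\sim\rho_2\phi\rho_1^{-1}$ for $\rho_1,\rho_2\in\Aut(E)$. Via Deuring's correspondence this is the same datum as an element $\alpha\in\mathcal{O}:=\End(E)$ with $\Nrd(\alpha)=\ell$, taken modulo the two-sided action of $\Aut(E)$. The ring $\mathcal{O}$ is an explicit maximal order in $B_{p,\infty}$ (recorded in \cite{aam,OX}) which contains the CM order $R$ ($=\Z[\omega]$ for $E_0$, $=\Z[i]$ for $E_{1728}$) together with an element $\pi$ satisfying $\pi^2=-p$ and $\pi r=\bar r\pi$ for $r\in R$. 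Under the decomposition $\mathcal{O}\otimes\Q=K\oplus K\pi$ with $K=\mathrm{Frac}(R)$, any $\alpha=\beta_1+\beta_2\pi$ has $\Nrd(\alpha)=N(\beta_1)+pN(\beta_2)$.

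Next, I would bound the $\pi$-component: from the explicit $\Z$-basis of $\mathcal{O}$, the minimum nonzero value of $N(\beta_2)$ over $\alpha\in\mathcal{O}$ is $1/3$ in the $E_0$ case and $1/4$ in the $E_{1728}$ case (reflecting the denominators appearing in the Ibukiyama/Pizer-type basis). Hence the hypothesis $p>3\ell$ (resp.\ $p>4\ell$) forces $\beta_2=0$, so $\alpha\in R$. Since $R$ is commutative and $\Aut(E)=R^\times$, the two-sided $\Aut(E)$-action on $R$ collapses to the one-sided action $\alpha\mapsto u\alpha$ for $u\in R^\times$, and the loop count equals the number of $R^\times$-orbits of norm-$\ell$ elements in $R$. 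By classical splitting criteria, such elements exist iff $\ell$ splits in $R$, i.e.\ iff $\ell\equiv 1\pmod 3$ (for $E_0$) or $\ell\equiv 1\pmod 4$ (for $E_{1728}$). When $\ell$ splits as $\ell=\pi_\ell\bar\pi_\ell$, the hypothesis $\ell>3$ ensures $\ell$ is unramified, so $\pi_\ell$ and $\bar\pi_\ell$ are non-associate and give two distinct orbits; in the inert case there are none. This yields $2$ or $0$ loops as stated.

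The main technical obstacle is the lower bound on $N(\beta_2)$ used in Step 3: it requires an explicit integral basis of $\mathcal{O}$ (of Ibukiyama/Pizer type, already worked out in \cite{OX,aam}) and a direct minimization over that lattice. The constants $3$ and $4$ appearing in the hypotheses on $p$ are precisely the reciprocals of these minimal norms; the remainder of the argument is a routine translation between isogenies, ideals, and norm-$\ell$ elements.
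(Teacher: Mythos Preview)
The paper does not prove Theorem~\ref{theorem:loop}; it quotes it from \cite{aam} (Theorems~10 and~12) and \cite{OX}. So there is no in-paper proof to compare against. That said, your argument is correct and is essentially the standard one: the key computation --- writing $\alpha=\beta_1+\beta_2 j$ with $\beta_1,\beta_2\in K$, observing $\Nrd(\alpha)=N(\beta_1)+pN(\beta_2)$, and using the explicit $\Z$-basis of $\mathcal{O}$ to see that the minimal nonzero value of $N(\beta_2)$ is $1/4$ (resp.\ $1/3$) --- is exactly the mechanism behind the bounds $p>4\ell$ and $p>3\ell$. One small point worth stating explicitly in your write-up: you use $\Aut(E)=\mathcal{O}^\times=R^\times$, which follows from the same norm argument (applied with $\ell=1$) once $p>4$ (resp.\ $p>3$); without this the collapse of the two-sided action to a one-sided one is not automatic.

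It is also worth noting that the paper's own Lemmas~\ref{lemma:ell2} and~\ref{lemma:ell4} are precisely the $\ell^2$-scaled versions of your bounding step (with $p>4\ell^2$, $p>3\ell^2$), used there to control elements of $\ell^{-1}\mathcal{O}$ of norm $1$ rather than elements of $\mathcal{O}$ of norm $\ell$. So your approach is not merely compatible with the paper --- it is the degree-$\ell$ instance of the same idea the authors deploy for the neighborhood problem.
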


In this paper, we shall work on the neighborhood of the vertices $[E_0]$ and $[E_{1728}]$ in $\cG_\ell(\F_{p^2},-2p)$. Our main result is

\begin{theorem}\label{theorem:main} Suppose $\ell>3$.

$(1)$ If $p\equiv 3\bmod{4}$ and $p>4\ell^2$,  there are $\frac{1}{2}\bigl(\ell-(-1)^{\frac{\ell-1}{2}}\bigr)$ vertices adjacent to  $[E_{1728}]$ in the graph, each connecting $[E_{1728}]$ with $2$ edges.
Moreover,  $1+(\frac{\ell}{p})$ of the vertices are of   $j$-invariants in $\F_p-\{1728\}$.

$(2)$ If $p\equiv 2\bmod{3}$ and $p>3\ell^2$, there are $\frac{1}{3}(\ell-(\frac{\ell}{3}))$  vertices adjacent to $[E_{0}]$ in the graph, each connecting $[E_{0}]$ with $3$ edges. Moreover,  $1+(\frac{-p}{\ell})$ of the vertices are of   $j$-invariants in $\F_p^*$.
\end{theorem}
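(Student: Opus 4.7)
The plan is to work in the graph $\cG_\ell(\overline{\F}_p)$ via the isomorphism (1) and to classify the $\ell+1$ cyclic order-$\ell$ subgroups $K$ of $E_0$ (resp.\ $E_{1728}$) together with their $\Aut$-orbits. Each kernel $K$ determines an isogeny $\phi_K\colon E_0\to E_0/K$, and two kernels in the same $\Aut$-orbit give isomorphic quotients, so the target vertex depends only on the orbit. The effective action is by $\Aut(E_0)/\{\pm 1\}=\langle\zeta_3\rangle\cong\Z/3\Z$ (resp.\ $\Aut(E_{1728})/\{\pm 1\}=\langle [i]\rangle\cong\Z/2\Z$), and its distinguished generator acts on $E_0[\ell]\cong\F_\ell^2$ (resp.\ $E_{1728}[\ell]$) with characteristic polynomial $X^2+X+1$ (resp.\ $X^2+1$); this polynomial splits in $\F_\ell$ iff $\ell\equiv 1\bmod 3$ (resp.\ $\ell\equiv 1\bmod 4$), producing the fixed-kernel count predicted by Theorem~\ref{theorem:loop} for the loops, plus $\frac{1}{3}(\ell-(\frac{\ell}{3}))$ (resp.\ $\frac{1}{2}(\ell-(-1)^{(\ell-1)/2})$) non-loop orbits of size $3$ (resp.\ $2$). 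This matches both the claimed vertex count and the edge multiplicity, provided distinct orbits give distinct target $j$-invariants.

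Establishing this last fact is the main obstacle. Suppose $K_1,K_2$ lie in distinct non-loop orbits with $E_0/K_1\cong E_0/K_2$ via an isomorphism $\psi$. Then $\alpha:=\hat\phi_{K_2}\circ\psi\circ\phi_{K_1}\in\End(E_0)$ has reduced norm $\ell^2$; the scalar cases $\alpha=\pm\ell$ force $\ker\phi_{K_1}=\ker\phi_{K_2}$, a contradiction, so $\alpha\notin\Z$. The crucial step is a lattice estimate on the reduced-norm form. Writing $B_{p,\infty}=\Q(\zeta_3)\oplus\Q(\zeta_3)\pi_p$ with $\pi_p^2=-p$ and $\pi_p\zeta_3=\zeta_3^{-1}\pi_p$ (and analogously $B_{p,\infty}=\Q(i)\oplus\Q(i)\pi_p$ for $E_{1728}$), the explicit maximal-order structure of $\End(E_0)$ (resp.\ $\End(E_{1728})$) yields that every element outside $\Z[\zeta_3]$ (resp.\ $\Z[i]$) has reduced norm at least $p/3$ (resp.\ $p/4$). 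The thresholds $p>3\ell^2$ (resp.\ $p>4\ell^2$) therefore force $\alpha\in\Z[\zeta_3]$ (resp.\ $\Z[i]$). A classification of elements of norm $\ell^2$ in $\Z[\zeta_3]$ via the prime decomposition of $\ell$ completes the argument: either $\alpha=u\ell$ for a unit $u$, whence $K_2=uK_1$ and the two orbits coincide, a contradiction; or, in the split case $\ell\equiv 1\bmod 3$, $\alpha=u\fp^2$ for a prime $\fp$ above $\ell$, in which case $\ker\alpha=E_0[\fp^2]$ is cyclic of order $\ell^2$ and its unique order-$\ell$ subgroup is $K_1=E_0[\fp]=\ker\fp$, a loop kernel contradicting the non-loop hypothesis. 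The same scheme handles $E_{1728}$.

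For the $\F_p$-rational vertex count I would use the twist relations $\pi_p\zeta_3=\zeta_3^{-1}\pi_p$ (from $\zeta_3^p=\zeta_3^{-1}$ when $p\equiv 2\bmod 3$) and $\pi_p[i]=-[i]\pi_p$ (from $i^p=-i$ when $p\equiv 3\bmod 4$). The target $[E/K]$ has $j\in\F_p$ iff the $\Aut$-orbit of $K$ is $\pi_p$-stable. For $E_0$, inside each stable orbit $\pi_p$ acts as $\zeta_3^iK\mapsto\zeta_3^{a-i}K$ for a fixed $a\in\{0,1,2\}$, hence as a transposition with a unique fixed kernel; thus the number of stable orbits equals the number of $\pi_p$-fixed kernels, which is $1+(\frac{-p}{\ell})$ by the eigenvalue analysis of $X^2+p\bmod\ell$. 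For $E_{1728}$, $\pi_p$ commutes with $[i]$ on the set of kernels, so a stable orbit consists either of two $\pi_p$-fixed kernels or of two kernels swapped by $\pi_p$; these configurations are governed respectively by the operators $\pi_p$ and $-[i]\pi_p$, each of characteristic polynomial $X^2+p$, so each contributes one stable orbit precisely when $(\frac{-p}{\ell})=1$, yielding $1+(\frac{-p}{\ell})$ stable orbits, which equals $1+(\frac{\ell}{p})$ by quadratic reciprocity since $p\equiv 3\bmod 4$.
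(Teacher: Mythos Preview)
Your proof is correct. For the first part—counting adjacent vertices and showing each is reached by exactly $2$ (resp.\ $3$) edges—your argument is essentially the paper's, recast in the language of kernels and automorphism orbits rather than left $\cO$-ideals and the $(R/\ell R)^\times$-action of Theorem~\ref{theorem:21}. The core step is identical: a reduced-norm estimate on the explicit maximal order forces any endomorphism of norm $\ell^2$ to lie in $\Z[i]$ (resp.\ $\Z[\zeta_3]$) once $p>4\ell^2$ (resp.\ $p>3\ell^2$), after which the factorization of $\ell$ in that imaginary quadratic ring finishes the job. This is exactly the content of the paper's Lemmas~\ref{lemma:ell2} and~\ref{lemma:ell4}; your $\alpha\in\cO$ of norm $\ell^2$ and the paper's $\mu\in\ell^{-1}\cO$ of norm $1$ are related by $\mu=\alpha/\ell$.

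For the $\F_p$-rational vertex count your route is genuinely different and cleaner. The paper invokes the Delfs--Galbraith criterion that $j(E')\in\F_p$ iff $\cO_R(I)$ contains a square root of $-p$, then determines which right orders do so by explicit case-by-case matrix computations inside $\cO/\ell\cO\cong M_2(\F_\ell)$, treating $(\frac{-p}{\ell})=\pm 1$ separately and checking each ideal $I_a$ by hand. Your criterion—that $j(E/K)\in\F_p$ iff the $\Aut$-orbit of $K$ is $\pi_p$-stable—combined with the anticommutation relations $\pi_p\zeta_3=\zeta_3^{-1}\pi_p$ and $\pi_p[i]=[i]^{-1}\pi_p$ reduces everything to a short fixed-point analysis of operators with characteristic polynomial $X^2+p$ acting on $\mathbb{P}^1(\F_\ell)$, making the value $1+(\frac{-p}{\ell})$ fall out immediately. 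The trade-off is that the paper's explicit computation also pins down \emph{which} ideal classes give the $\F_p$-rational neighbours, whereas your argument only counts them.
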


\begin{remark}
(1) It would be best if  the bounds $4\ell^2$ and $3\ell^2$ can be improved to $4\ell$ and $3\ell$, as is the case for the number of loops in Theorem~\ref{theorem:loop}. However, this speculation is actually false. For a fixed prime $\ell>3$, let $P_1(\ell)$ (resp. $P_2(\ell)$) be the largest prime $p$ such that the number of vertices adjacent to $[E_{1728}]$ (resp. $[E_0]$) in $\cG_\ell(\F_{p^2},-2p)$ is smaller than  $\frac{1}{2}\bigl(\ell-(-1)^{\frac{\ell-1}{2}}\bigr)$ (resp. $\frac{1}{3}(\ell-(\frac{\ell}{3}))$), i.e., our main theorem fails for such a $p$. By numerical evidence presented in \S~\ref{sec:num}, for $5\leq \ell\leq 200$, most of the time $P_1(\ell)$ is the largest prime $\equiv 3\bmod{4}$ and smaller than $4\ell^2$, $P_2(47)=6599$ is the largest prime   $\equiv 2 \bmod 3$ and smaller than $3\times 47^2=6627$. In this sense, our bounds are sharp.

(2) For $\ell=2$ or $3$, we shall describe the neighborhood of $[E_0]$ and $[E_{1728}]$ in $\cG_\ell(\F_{p^2},-2p)$ for any prime $p>3$ (such that either $E_0$ or $E_{1728}$ is supersingular) in \S~\ref{sec:extra}.
\end{remark}
As the $j$-invariants of elliptic curves adjacent to $E_0$ (resp. $E_{1728}$) are roots of the modular polynomial $\Phi_\ell(0,X)$ (resp. $\Phi_\ell(1728,X)$), our result has the following immediate consequences about their roots.
\begin{theorem} \label{theorem:modular}
Suppose $\ell>3$.

$(1)$ If $p\equiv 3\bmod 4$ and $p>4\ell^2$, then if $\ell\equiv 3\bmod{4}$,
 \[ \Phi_\ell(1728,X)=\prod_{i=1}^{(\ell+1)/2} (X-a_i)^2 \]
with $1+(\frac{\ell}{p})$ of the roots $a_i\in \F_p-\{1728\}$ and the rest in $\F_{P^2}-\F_p$; if $\ell\equiv 1\bmod{4}$,
 \[ \Phi_\ell(1728,X)=(X-1728)^2 \prod_{i=1}^{(\ell-1)/2} (X-a_i)^2 \]
with $1+(\frac{\ell}{p})$ of the roots $a_i\in \F_p-\{1728\}$ and  the rest in $\F_{p^2}-\F_p$.

$(2)$ If $p\equiv 2\bmod 3$ and $p>3\ell^2$, then if $\ell\equiv 2\bmod{3}$,
 \[ \Phi_\ell(0,X)=\prod_{i=1}^{(\ell+1)/3} (X-a_i)^3 \]
with $1+(\frac{-p}{\ell})$ of the roots $a_i\in \F^*_p$ and the rest in $\F_{P^2}-\F_p$; if $\ell\equiv 1\bmod{3}$,
 \[ \Phi_\ell(0,X)=X^2 \prod_{i=1}^{(\ell-1)/3} (X-a_i)^3 \]
with $1+(\frac{-p}{\ell})$ of the roots $a_i\in \F^*_p$ and the rest in $\F_{p^2}-\F_p$.
\end{theorem}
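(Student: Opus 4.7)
The plan is to translate the edge data of Theorems \ref{theorem:loop} and \ref{theorem:main} into the factorization of $\Phi_\ell(1728,X)$ and $\Phi_\ell(0,X)$ via the defining property of the modular polynomial. Recall that for any elliptic curve $E$ over $\overline{\F}_p$,
\begin{equation*}
\Phi_\ell(j(E),Y) = \prod_{C}\bigl(Y-j(E/C)\bigr),
\end{equation*}
where $C$ ranges over the $\ell+1$ cyclic subgroups of order $\ell$ in $E$. Hence the multiplicity of a root $a$ equals the number of such $C$ with $j(E/C)=a$, which coincides with the number of edges from $[E]$ to $[E_a]$ in $\cG_\ell(\overline{\F}_p)\cong\cG_\ell(\F_{p^2},-2p)$.

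For case (1) I would take $E=E_{1728}$. By Theorem \ref{theorem:main}(1) there are $\frac{1}{2}\bigl(\ell-(-1)^{\frac{\ell-1}{2}}\bigr)$ pairwise distinct adjacent vertices $[E_{a_i}]$, each joined to $[E_{1728}]$ by $2$ edges, so each $(X-a_i)$ occurs with multiplicity $2$. By Theorem \ref{theorem:loop}(1) there are additionally $2$ loops at $[E_{1728}]$ when $\ell\equiv 1\bmod 4$, contributing a factor $(X-1728)^2$, and none when $\ell\equiv 3\bmod 4$. A degree check confirms the total is $\ell+1$ in both cases, so no roots are missed. The subdivision of the $a_i$ between $\F_p\setminus\{1728\}$ and $\F_{p^2}\setminus\F_p$ is read off directly from the corresponding statement in Theorem \ref{theorem:main}(1), since $a_i=j(E_{a_i})$.

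Case (2) is handled identically with $E=E_0$: Theorem \ref{theorem:main}(2) supplies $\frac{1}{3}(\ell-(\frac{\ell}{3}))$ adjacent vertices each contributing a cubed factor, and Theorem \ref{theorem:loop}(2) contributes $X^2$ when $\ell\equiv 1\bmod 3$ (from the two loops at $j=0$) and nothing when $\ell\equiv 2\bmod 3$. A degree count again ensures completeness, and the distribution of roots between $\F_p^*$ and $\F_{p^2}\setminus\F_p$ is inherited from Theorem \ref{theorem:main}(2).

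The argument is essentially bookkeeping; the only subtlety worth checking is that the edge-multiplicity ``each connecting with $2$ (resp.\ $3$) edges'' in Theorem \ref{theorem:main} truly matches the cyclic-subgroup multiplicity that governs the roots of $\Phi_\ell$. In the convention used throughout the paper, the $\ell+1$ cyclic subgroups of $E_0$ or $E_{1728}$ correspond bijectively to the edges out of the corresponding vertex, so no separate orbit analysis of $\Aut(E_0)$ or $\Aut(E_{1728})$ is required for the modular-polynomial formula to apply verbatim.
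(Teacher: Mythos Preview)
Your proposal is correct and matches the paper's approach exactly: the paper presents Theorem~\ref{theorem:modular} as an ``immediate consequence'' of Theorems~\ref{theorem:loop} and~\ref{theorem:main} via the identification of the roots of $\Phi_\ell(j(E),X)$ with the $j$-invariants of the $\ell+1$ curves $\ell$-isogenous to $E$, counted with multiplicity, and gives no further argument. Your write-up simply makes explicit the bookkeeping (degree count, loop contribution, field-of-definition split) that the paper leaves implicit, and your closing remark about the edge/subgroup correspondence is consistent with the Deuring correspondence (Theorem~\ref{theorem:22}) as used in the paper.
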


\section{Preliminaries} \label{sec_formulas}
 \subsection{Elliptic curves over finite fields}
We recall basic facts about elliptic curves over finite fields. The general reference is \cite{js}. Let $\overline{\F}_p$ be the algebraic closure of $\F_p$.

An elliptic curves $E$ over the finite field $\F_q$ for $q$ a power of $p>3$ is defined by a Weierstrass equation $y^2=x^3+ax+b$ with $a,b\in \F_q$ and $4a^3+27b^2 \ne 0$.

The trace of the Frobenius $\pi: (x,y)\mapsto (x^q,y^q)$ on the Tate module of $E$, which we also call the trace of $E$ and denoted by $\tr(E)$, is the number $t=q+1-\# E(\F_q)$. The minimal polynomial of $\pi$ is $x^2-tx+q$ and Hasse's Theorem (see \cite{js}) implies that $|t|\leq 2\sqrt{q}$.

The $j$-invariant of $E$, which determines the isomorphism class of $E$ over $\overline{\F}_p$, is $j(E)=1728\cdot \frac{4a^3}{4a^3+27b^2}$.
The endomorphism ring $\End(E)$ of $E$ is the set of all isogenies between $E$ and itself. For $E$ an elliptic curve over $\F_q$, $\End(E)$ is either an order of an imaginary quadratic field, in which case $E$ is called ordinary; or a maximal order of a quaternion algebra, in which case  $E$ is called supersingular. It is well-known that  $E$ is ordinary (resp. supersingular) if and only if $p\nmid t$ (resp. $p\mid t$). Moreover, a supersingular elliptic curve $E$ over $\F_q$ always has $j$-invariant $j(E)\in \F_{p^2}$.

From now on, suppose $E$ is supersingular. Since $j(E)\in \F_{p^2}$, we assume $E$ is also defined over $\F_{p^2}$. Then $t=0$, $\pm p$ or $\pm 2p$.

 \subsection{Quaternion algebra}
A quaternion algebra over $\mathbb {Q}$ is of the form $H(a,b)=\Q+\Q i+\Q j+\Q k$, where $i^2=a$, $j^2=b$ and $k=ij=-ji$. The canonical involution on $H(a,b)$ is the map sending $\alpha=a_1+a_2i+a_3j+a_4k\in H(a,b)$ to $\bar {\alpha}=a_1-a_2i-a_3j-a_4k$.
The reduced trace of  $\alpha$ is $\Trd(\alpha)=\alpha+\bar{\alpha}=2a_1$, and the reduced norm is $\Nrd(\alpha)=\alpha\bar{\alpha}={a_1}^2-a{a_2}^2-b{a_3}^2+ab{a_4}^2$.
A subset $\Lambda$ is a lattice in $H(a,b)$ if $\Lambda=\Z x_1+\Z x_2+\Z x_3+\Z x_4$ and $\{x_1,x_2,x_3,x_4\}$ is a $\Q$-basis of $H(a,b)$.

The unique quaternion algebra over $\mathbb {Q}$ ramified only at $p$ and $\infty$ is $B_{p,\infty}=H(-1,-p)$.

\subsection{Orders and ideals in $B_{p,\infty}$}
An order $\cO$ of  $B_{p,\infty}$  is a subring of $B_{p,\infty}$ which is also a lattice, and is called a maximal order if it is not properly contained in any other order.

For $\cO$ an order of $B_{p,\infty}$, let $I$ be a left ideal of  $\cO$. The left order $\cO_L(I)$ and right order $\cO_R(I)$  of $I$ are defined to be
 \[ \cO_L(I)=\{x \in B_{p,\infty} \mid  xI \subseteq I\},\quad \cO_R(I)=\{x \in B_{p,\infty} \mid  Ix \subseteq I\}. \]
If $\cO$ is a maximal order, then $\cO_L(I)=\cO$ and $\cO_R(I)= \cO'$ is also a maximal order, in which case we say that $I$ connects $\cO$ and $\cO'$. Moreover, if $\cO$ is maximal,
 \[ \cO_R(I)= \cO\ \Longleftrightarrow\ I=\cO x\ \text{is principal}.\]
Define the reduced norm $\Nrd(I)$ of $I$ by
 \[ \Nrd(I)=\gcd(\{\Nrd(\alpha) \mid \alpha \in I \}). \]

\begin{lemma}\label{lemma:1} If $\cO$ is a maximal order in $B_{p,\infty}$ and $I$ is a left $\cO$-ideal of reduced norm $\ell$, then $\ell \in I$.
\end{lemma}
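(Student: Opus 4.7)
The plan is a two-step argument that reduces the lemma to Bezout. The first (conceptual) step is the observation that any order $\cO$ in a quaternion algebra is stable under the canonical involution $\alpha\mapsto\bar\alpha$. Given $x\in\cO$, the characteristic polynomial $X^2-\Trd(x)X+\Nrd(x)$ has integer coefficients by $\Z$-integrality of $\cO$, so in particular $\Trd(x)\in\Z\subseteq\cO$, and therefore $\bar x=\Trd(x)-x\in\cO$.

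Given this, every $\alpha\in I$ satisfies $\bar\alpha\in\cO$, and since $I$ is a left $\cO$-ideal,
\[
\Nrd(\alpha)\;=\;\bar\alpha\cdot\alpha\;\in\;\cO\cdot I\;\subseteq\;I.
\]
Note that it is essential here to use the identity $\Nrd(\alpha)=\bar\alpha\alpha$ rather than $\Nrd(\alpha)=\alpha\bar\alpha$, so that $\cO$ acts on the left of $I$. The upshot is that every reduced norm of an element of $I$ already lies in $I\cap\Z$.

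The second step is then just Bezout. By the definition $\Nrd(I)=\gcd\{\Nrd(\alpha):\alpha\in I\}=\ell$, one may pick finitely many $\alpha_1,\dots,\alpha_r\in I$ and integers $c_1,\dots,c_r$ with $\sum_i c_i\Nrd(\alpha_i)=\ell$. Each $\Nrd(\alpha_i)$ lies in $I$ by the previous step, and $I$ is closed under $\Z$-linear combinations, so $\ell\in I$.

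I do not anticipate any serious obstacle; the only point worth double-checking is the involution-stability of $\cO$, which follows formally from $\Z$-integrality. Notice that maximality of $\cO$ is not actually used in this argument, so the statement holds for any order $\cO$ and any left $\cO$-ideal of reduced norm $\ell$.
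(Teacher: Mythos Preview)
Your proof is correct and takes a genuinely different---and more elementary---route than the paper. The paper argues by contradiction: assuming $\ell\notin I$, it uses the index formula $[\cO:I]=\Nrd(I)^2=\ell^2$ to conclude that $\cO/I\cong\Z/\ell^2\Z$, extracts a $\Z$-basis $\{\ell^2,a,b,c\}$ of $I$ with $\{1,a,b,c\}$ a basis of $\cO$, checks from this that $\cO\subseteq\cO_R(I)$, invokes \emph{maximality} of $\cO$ to force $\cO_R(I)=\cO$, and then appeals to the identity $\bar I I=\Nrd(I)\,\cO_R(I)$ to obtain $\ell\cO\subseteq I$. Your argument bypasses all of this structure theory: the single observation $\Nrd(\alpha)=\bar\alpha\,\alpha\in\cO I\subseteq I$ (using involution-stability of $\cO$) already puts every reduced norm inside $I$, and Bezout finishes. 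Your proof is shorter, avoids the machinery of left/right orders and the formula $\bar I I=\Nrd(I)\cO_R(I)$, and---as you correctly note---does not require $\cO$ to be maximal, so the lemma in fact holds for any order. The paper's approach does illustrate how that machinery operates, which is thematically relevant later, but for this particular lemma your argument is the cleaner one.
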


\begin{proof} Note that the abelian group $\cO/I$ is of order ${\Nrd(I)}^2=\ell^2$. Assume $\ell \notin I$. Then the image of $1$ in $\cO/I$ must be of order $\ell^2$ and   ${\cO}/I\cong \Z/\ell^2\Z$ is a cyclic group. Let $\{1,a,b,c\}$ be a $\Z$-basis of $\cO$. Let $t_a, t_b, t_c \in \Z$ such that $a-t_a$, $b-t_b$ and $c-t_c\in I$. We replace $\{a, b, c\}$ by $\{a-t_a, b-t_b, c-t_c\}$, then we get a $\Z$-basis $\{1,a,b,c\}$ of $\cO$ such that $\{\ell^2, a,b,c\}$ is a $\Z$-basis of $I$.  By computation, $\cO \subseteq \cO_R(I)$, since they are both maximal orders in $B_{p,\infty}$, $\cO_L(I)=\cO=\cO_R(I)$. From \cite[16.6.14]{v}, $\bar{I}I=\ell \cO_R(I)$, $I \bar{I}=\ell \cO_L(I)$. Thus $\bar{I}I=\ell \cO \subseteq \cO$, and $\bar{I} \subseteq \cO_L(I)=\cO$ by definition. Hence, $\ell \cO=\bar{I}I \subseteq \cO I \subseteq I$. We get a contradiction.
 \end{proof}

Now assume $\cO$ is a maximal order of $B_{p,\infty}$ containing a subring $\Z\langle i,j \rangle$ with $i^2=-q$, $j^2=-p$ and $ij=-ji$ such that $(q,p)=1$. Let $K=\Q(i)$. Then its ring of integers $\cO_K=\Z[i]$ if $q\equiv 1,\ 2\bmod{4}$ or $\Z[\frac{1+i}{2}]$ if $q\equiv 3\bmod{4}$. Let $R=\cO\bigcap K$. Then $\Z[i]\subseteq R\subseteq \cO_K$. Let $\epsilon=i$ if $R=\Z[i]$ or $\frac{1+i}{2}$ if $R=\cO_K=\Z[\frac{1+i}{2}]$.

Let $X_{\ell}$ be the set of of all left $\cO$-ideals of reduced norm $\ell$. Let $\hat{r}\in (R/\ell R)^\times$  and ${r}\in R$ a lifting of $\hat{r}$. Then for any $I\in X_\ell$, $\ell \cO+Ir$, depending only on $\hat{r}$ regardless the lifting,  is also in $X_\ell$ by using Lemma~\ref{lemma:1} and computing the reduced norm of elements in $\ell \cO+Ir$. Kohel et al ~\cite{klp} defined the action of $(R/{\ell R})^\times$ on $X_{\ell}$ by
\[(R/{\ell R})^\times \ \times \ X_{\ell} \ \rightarrow \ X_{\ell}; \ \ (\hat{r}, I) \ \mapsto \ \cO \ell+I r. \]
The following theorem was stated in \cite{klp} without a proof and we supply a proof here.
\begin{theorem} \label{theorem:21}
Assume $\cO$, $K$ and $R=\Z[\epsilon]$ as above.
Assume the prime $\ell \nmid 2pq[\cO:\Z\langle i,j\rangle]$.

$(1)$ If $\ell$ is inert in $R$, then $(R/{\ell R})^{\times}$  acts transitively on $X_\ell$ with $(\Z/{\ell \Z})^\times$ the stabilizer.

$(2)$ If $\ell$ splits in $R$,  write $\ell R=\mathfrak{p}_1\mathfrak{p}_2$, then $\{\cO\mathfrak{p}_1, \cO\mathfrak{p}_2\}\subset X_\ell$, $(R/{\ell R})^{\times}$  acts trivially on ${\cO\mathfrak{p}_1}$ and ${\cO\mathfrak{p}_2}$, and  acts transitively on $X_\ell-\{\cO\mathfrak{p}_1,\cO\mathfrak{p}_2\}$ with $(\Z/{\ell \Z})^\times$ the stabilizer.

In both cases, for any $I\in X_\ell-\{\cO\mathfrak{p}_1, \cO\mathfrak{p}_2\}$ and for $a\in \F_\ell$, let $I_a:= \ell\cO+ I (\tilde{a}+\epsilon)$ with $\tilde{a}\in \Z$ a lifting of $a$, then $X_\ell=\{I, I_a\mid a\in \F_\ell\}$.
\end{theorem}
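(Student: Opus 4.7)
The plan is to reduce everything modulo $\ell$ and translate the action into an action on lines in $\F_\ell^2$. By Lemma~\ref{lemma:1}, $\ell \in I$ for every $I \in X_\ell$, so $\ell\cO \subseteq I$ and one can work inside the quotient $\bar\cO := \cO/\ell\cO$. Since $\ell \neq p$ is unramified in $B_{p,\infty}$ and $\ell \nmid [\cO:\Z\langle i,j\rangle]$, there is an $\F_\ell$-algebra isomorphism $\bar\cO \cong M_2(\F_\ell)$ through which $\bar R := R/\ell R$ embeds as a two-dimensional semisimple subalgebra. The first key step is that $I \mapsto \bar I$ is a bijection from $X_\ell$ onto the set of two-dimensional left ideals of $M_2(\F_\ell)$, which in turn correspond to lines $L \subset V := \F_\ell^2$ via $L \leftrightarrow I_L := \{A \in M_2(\F_\ell) : A|_L = 0\}$; in particular $|X_\ell| = \ell + 1$.

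Next I would verify the action is well defined and lands in $X_\ell$: the relation $r \equiv r' \pmod{\ell R}$ forces $I(r-r') \subseteq \ell I \subseteq \ell\cO$, so $\ell\cO + Ir$ depends only on $\hat r$; and if $\hat r \in (R/\ell R)^\times$, then $\Nrd(r) = r\bar r$ is coprime to $\ell$, making $r$ a unit in $\bar\cO$. A direct calculation gives $I_L \cdot \bar r = I_{\bar r^{-1} L}$ (since $A\bar r v = 0$ iff $\bar r v \in \ker A$), so the stabilizer of $I_L$ in $(R/\ell R)^\times$ equals $\{\bar r \in (\bar R)^\times : \bar r L = L\}$, computable purely from the $\bar R$-module structure on $V$.

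I would then split into cases. In the inert case, $\bar R \cong \F_{\ell^2}$ is a field and $V$ is a simple $\bar R$-module, so no $\F_\ell$-line is $\bar R$-stable; an element $\bar r$ fixing a line acts as a scalar $\lambda \in \F_\ell$ on it, so the minimal polynomial of $\bar r$ over $\F_\ell$ has $\lambda$ as a root, forcing $\bar r \in \F_\ell^\times$ (otherwise this polynomial would be irreducible of degree two). Hence every stabilizer equals $(\Z/\ell\Z)^\times$ and the action on $X_\ell$ is transitive by orbit counting. In the split case, $\bar R \cong \F_\ell \times \F_\ell$ has primitive idempotents $e_1, e_2$, and $V = L_1 \oplus L_2$ with $L_j := e_j V$ the two $\bar R$-stable lines. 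A short CRT computation shows $\bar\alpha_j$ is a unit multiple of $e_{j'}$ (where $\mathfrak{p}_j = (\ell,\alpha_j)$ and $\{j,j'\} = \{1,2\}$), giving $\cO\mathfrak{p}_j \equiv I_{L_{j'}} \pmod{\ell\cO}$; these are the two ideals with full stabilizer. All other lines give stabilizer $\F_\ell^\times$ (the diagonal in $\F_\ell^\times \times \F_\ell^\times$), so orbit counting yields transitivity on $X_\ell \setminus \{\cO\mathfrak{p}_1, \cO\mathfrak{p}_2\}$.

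For the final explicit description, $\widehat{\tilde a + \epsilon}$ is a unit in $\bar R$ for all $a \in \F_\ell$ in the inert case (the minimal polynomial of $\epsilon$ has no root modulo $\ell$) and for all but two $a$ in the split case (one per prime $\mathfrak{p}_j$). For distinct unit values $a, a'$, equality $I_a = I_{a'}$ would force $(\tilde a + \epsilon)(\tilde{a'} + \epsilon)^{-1} \in \F_\ell^\times$, impossible since both have $\epsilon$-coefficient $1$; the same reasoning gives $I_a \neq I$. This exhausts $X_\ell$ in the inert case. In the split case the two non-unit values of $a$ yield $\cO\mathfrak{p}_1, \cO\mathfrak{p}_2$ via the idempotent identity $\bar I \cdot e_{j'} = I_{L_j}$ (valid whenever $\bar I \notin \{I_{L_1}, I_{L_2}\}$), which together with $I$ and the $\ell-2$ remaining orbit members recovers all $\ell+1$ elements of $X_\ell$. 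The principal technical obstacle will be the bookkeeping tying $\cO\mathfrak{p}_j$ and the exceptional $I_a$ to the correct line $L_{j'}$ inside $M_2(\F_\ell)$, which rests on explicit computation with the idempotents $e_1, e_2$ of $\bar R$.
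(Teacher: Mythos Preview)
Your proposal is correct and shares the paper's overall strategy---reduce modulo $\ell$, identify $\cO/\ell\cO$ with $M_2(\F_\ell)$, and study the induced right action on the $\ell+1$ nonzero proper left ideals---but the execution is genuinely different. The paper fixes an explicit isomorphism $\theta$, parameterizes left ideals by concrete rank-one generators $\omega=\begin{psmallmatrix}0&0\\0&1\end{psmallmatrix}$ and $\omega_a=\begin{psmallmatrix}1&a\\0&0\end{psmallmatrix}$, and then checks transitivity by hand via matrix identities of the form $\omega_a\begin{psmallmatrix}x&-qy\\y&x\end{psmallmatrix}\in M_2(\F_\ell)\omega_b$. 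You instead use the annihilator correspondence $L\leftrightarrow I_L=\{A:AL=0\}$ between lines in $V=\F_\ell^2$ and two-dimensional left ideals, observe that the action becomes $I_L\cdot\bar r=I_{\bar r^{-1}L}$, and read off stabilizers directly from the $\bar R$-module structure on $V$: in the inert case $V$ is a one-dimensional $\F_{\ell^2}$-space with no $\F_\ell$-stable lines, while in the split case the two idempotents cut out the only two stable lines. Orbit--stabilizer then replaces the paper's explicit computations. Your route is cleaner and coordinate-free; the paper's buys concrete formulas that it reuses later when computing which $\cO_R(I_a)$ contain a square root of $-p$. One small bookkeeping point: with the natural convention that $e_j\mapsto 1$ in $R/\mathfrak p_j$, one actually gets $\overline{\cO\mathfrak p_1}=M_2(\F_\ell)e_2=I_{\ker e_2}=I_{L_1}$ rather than $I_{L_2}$, so your stated identification $\cO\mathfrak p_j\equiv I_{L_{j'}}$ has an index flip---but you already flag this as the place needing care, and it does not affect the argument.
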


\begin{proof}
As  $\ell \nmid pq [\cO:\Z\langle i,j\rangle]$,  $\cO/{\ell\cO}=\F_\ell \langle i,j\rangle$ with $i^2=-q$, $j^2=-p$ and $ij=-ji=k$. From ~\cite{gmm}, there is a ring isomorphism $\theta:\ \cO/{\ell\cO}\rightarrow M_2(\F_\ell)$ given by
\[1\mapsto \begin{pmatrix}                
    1 & 0\\
    0 & 1
\end{pmatrix},\ i\mapsto \begin{pmatrix}                
    0 & -q\\
    1 & 0
\end{pmatrix},\ j\mapsto \begin{pmatrix}                
    u & qv\\
    v & -u
\end{pmatrix}\]
where $(u,\ v)$ is a solution of $u^2+qv^2= -p$ in $\F_\ell$ (note that  this equation is always solvable  in $\F_\ell$).

From \cite[Theorem 6 in \S13]{ab}, since $M_2(\F_{\ell})$ is semi-simple and the only simple left $M_2(\F_{\ell})$-module is $2$-dimensional over $\F_\ell$, every nonzero proper left ideal must be a simple left $M_2(\F_{\ell})$-module and is generated by just one element $M\neq 0$. For $M$ is not invertible,  $\mathrm{rank}(M)=1$. Since $M_2(\F_\ell) M=M_2(\F_\ell) PM$ for any $P \in \mathrm{GL}_2(\F_{\ell})$, we may assume that $M=\omega:=\begin{pmatrix}0 & 0\\0 & 1\end{pmatrix}$ or $\omega_a:=\begin{pmatrix}1 & a\\0 & 0\end{pmatrix}$ for some $a \in \F_{\ell}$. Moreover, $\omega$ and $\omega_a$ generate different left ideals of $M_2(\F_\ell)$. Thus the set of all non-zero proper left ideals of $M_2(\F_{\ell})$ is the set
\[\overline{X_{\ell}}:=\{M_2(\F_\ell)\omega, M_2(\F_\ell)\omega_a\mid a\in \F_\ell\}. \]
Consequently, by the isomorphism $\theta$, $\cO/{\ell\cO}$ has $\ell+1$ non-zero proper left ideals, all of them are principal.

Under the canonical homomorphism $\cO\rightarrow \cO/{\ell\cO}$ and $\theta$, from Lemma~\ref{lemma:1}, the set $X_{\ell}$ maps bijectively to the set of nonzero left ideals of $\cO/{\ell\cO}$ and hence to $\overline{X_{\ell}}$. Moreover, the action of $(R/{\ell R})^{\times}$ on $X_{\ell}$ corresponds to the right multiplication of $(R/{\ell R})^{\times}$ on the set of nonzero left ideals of $\cO/{\ell\cO}$, and to the right multiplication action of  $\theta((R/{\ell R})^{\times})$  on $\overline{X_{\ell}}$.

Every element in $\theta((R/{\ell R})^{\times})$ is of the form $\begin{pmatrix}x & -qy\\y & x\end{pmatrix}$ with $x^2+qy^2 \neq 0$.  If $a_0\in \F_\ell$ satisfying $a_0^2+q = 0$, then $x+a_0y \neq 0$ and
 \[\omega_{a_0} \begin{pmatrix}x & -qy\\y & x\end{pmatrix}=\begin{pmatrix}x+a_0y & a_0(x+a_0y)\\0 & 0\end{pmatrix}\in M_2(\F_\ell)\omega_{a_0}. \]
 If $a\in \F_\ell$ satisfying $a^2+q \neq 0$, then for any $b$ such that $b^2+q\neq 0$,
 \begin{equation}~\label{eq:inert1}
 \omega_a \begin{pmatrix}\frac{q+ab}{q+a^2} & -q\frac{a-b}{q+a^2}\\\frac{a-b}{q+a^2} & \frac{q+ab}{q+a^2}\end{pmatrix}
 =\omega_b,
\end{equation}
 and
 \begin{equation}~\label{eq:inert2}
 \omega_a \begin{pmatrix}\frac{a}{q+a^2} & q\frac{1}{q+a^2}\\-\frac{1}{q+a^2} & \frac{a}{q+a^2}\end{pmatrix}=\begin{pmatrix}0 & 1\\0 & 0\end{pmatrix}\in M_2(\F_\ell)\omega.
 \end{equation}

Since $\ell\nmid 2q$, $\ell$ is prime to the conductor of $R$ and  $R\ell$ is the product of at most two prime $R$-ideals.

If $\ell$ is inert in $R$, there is no $a_0 \in \F_{\ell}$ such that ${a_0}^2+q=0$, thus the action of $\theta((R/\ell R)^{\times})$ on $\overline{X_{\ell}}$ is transitive by ~(\ref{eq:inert1}) and (\ref{eq:inert2}). In this case, $\{1, a+\epsilon\mid 0\leq a\leq \ell-1\}$ is a coset representative of $(\Z/\ell\Z)^\times$ in $(R/\ell R)^\times$, hence $X_\ell=\{I, I_a\mid a\in \F_\ell\}$ where $I$ is any element in $X_\ell$.

If $\ell R=\fp_1\fp_2$ splits in $R$, then $\fp_1=(\ell, a+\epsilon)$ and $\fp_2=\bar{\fp}_1=(\ell, a+\bar{\epsilon})$ for some $a\in \Z$ such that $N(a+\epsilon)=\Nrd(a+\epsilon)=0\in \F_\ell$, this implies that $\cO \fp_1=\ell \cO +\cO(a+\epsilon)$ and $\cO \fp_2=\ell \cO +\cO(a+\bar{\epsilon})$ are in $X_\ell$. This also implies that there exists some $a_0\in \F_\ell$ such that ${a_0}^2+q = 0$.
Thus  $\theta((R/\ell R)^{\times})$ has one orbit of length $\ell-1$ and two fixed points $\omega_{a_0}$, $\omega_{-a_0}$. In this case, $\{1, b+\epsilon\mid b\in \F_\ell, N(b+\epsilon)\neq 0\}$ is a coset representative of $(\Z/\ell\Z)^\times$ in $(R/\ell R)^\times$. For $I=\cO \fp_1=\ell \cO +\cO(a+\epsilon)$ or $\cO\fp_2$, $\ell\cO+Ir=\ell\cO+rI\subseteq I$, they must be equal since both are left $\cO$-ideals of reduced norm $\ell$. Thus for any $I\in X_\ell-\{ \cO\fp_1, \cO\fp_2\}$, we still have $X_\ell=\{I, I_a\mid a\in \F_\ell\}$.
\end{proof}

\begin{remark}
For any $I \in X_{\ell}$, from the proof of Theorem~\ref{theorem:21}, we have $I=\cO\ell+\cO\alpha$ for some $\alpha \in \cO$.
\end{remark}

From now on, by abuse of notation, we identify $\F_\ell$ with the set $\{0,\cdots,\ell-1\}$ and $\tilde{a}$ with $a$ in the definition of $I_a$.

\subsection{Supersingular elliptic curves and $B_{p,\infty}$}

Suppose $E$ is a supersingular elliptic curve over $\F_{p^2}$, then
$\End(E)=\cO$ is a maximal order of a quaternion algebra $B_{p,\infty}$. For $I$  a left integral ideal of $\cO$, let $E[I]$=\{$P \in {E} \mid {\alpha}(P)=O$ for every $\alpha \in{I}$\}, then the isogeny
 \[ {\phi}_I:E \rightarrow {E}_I =E/E[I] \]
has $\ker\phi_I=E[I]$ and  $\deg(\phi_I)=\Nrd(I)$ the reduced norm of $I$. On the other hand, if $\phi: E\rightarrow E'$ is an isogeny of degree $n$, then $\ker\phi$ is of order $n$ and $I_\phi=\{\alpha\in \cO\mid \alpha(P)=O\ \text{for all}\ P\in \ker\phi\}$ is a left $\cO$-ideal of reduced norm $n$.
Deuring's Correspondence Theorem (see Voight \cite[chapter 42]{v} or \cite{d}) is the following theorem:

\begin{theorem} \label{theorem:22}
Let $E$ be a supersingular elliptic curve over $\F_{p^2}$ and $\End(E)=\cO$. Then $\cO$ is a maximal order of $B_{p,\infty}$.

$(1)$ There is a $1$-to-$1$ correspondence between left ideals $I$ of $\cO$ of reduced norm $n$ and equivalent classes of isogenies $\phi: E\rightarrow E'$ of degree $n$ given by $I\mapsto [\phi_I]$ and $[\phi]\mapsto I_\phi$.

$(2)$ If $\phi:E \rightarrow E'$ and $I$ are corresponding to each other, then  $\End(E')\cong \mathcal {O}_R(I)$ is the right order of $I$ in $B_{p,\infty}$. In particular, $\phi\in \End(E)$ if and only if $I=I_\phi=\cO \phi$ is principal.

$(3)$ Suppose that $\phi_1:E \rightarrow E_1$, $\phi_2:E \rightarrow E_2$ are two isogenies corresponding to the left ideals $I_1,I_2\subseteq \cO$. Then  $E_1$ and $E_2$ are in the same isomorphism class if and only if $I_1={I_2}x$ for some $x\in B_{p,\infty}$. i.e.  $I_1$ and $I_2$ are in the same left ideal class.
\end{theorem}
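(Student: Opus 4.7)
The plan is to construct mutually inverse assignments between left $\cO$-ideals $I$ of reduced norm $n$ and equivalence classes of isogenies $\phi: E \to E'$ of degree $n$, and then to deduce parts (2) and (3) from the construction. For a left $\cO$-ideal $I$, set
\[ E[I] := \bigcap_{\alpha \in I} \ker \alpha \subseteq E, \]
a finite subgroup scheme of $E$ stable under the left $\cO$-action. The crucial first step is the identity $|E[I]| = \Nrd(I) = n$. By Lemma~\ref{lemma:1}, $n\in I$, so $E[I] \subseteq E[n]$, which has order $n^2$; choosing a $\Z$-basis of $\cO$ adapted to $I$ and using $\cO/n\cO \cong M_2(\Z/n)$ when $(n,p)=1$, one computes the annihilator of the image $\overline I \subseteq M_2(\Z/n)$ on the standard $2$-dimensional module $E[n] \cong (\Z/n)^2$ and finds $|E[I]| = n$. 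The quotient $\phi_I: E \to E_I := E/E[I]$ is therefore an isogeny of degree $n$.

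Conversely, for $\phi: E \to E'$ of degree $n$, set $I_\phi := \{\alpha \in \cO : \ker\phi \subseteq \ker \alpha\}$, clearly a left $\cO$-ideal. The identity $[n] = \hat\phi\,\phi$ shows $n \in I_\phi$, and the same $M_2(\Z/n)$-module count in reverse gives $\Nrd(I_\phi)=n$. The two constructions are mutually inverse: $E[I_\phi]=\ker\phi$ by definition, so $\phi_{I_\phi} \sim \phi$; conversely $I \subseteq I_{\phi_I}$ is tautological, and equality is forced because two left $\cO$-ideals $I \subseteq J$ with $|\cO/I|=|\cO/J|=n^2$ must coincide.

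For part (2), each $\beta \in \cO_R(I)$ satisfies $I\beta \subseteq I$, so $\beta$ preserves $E[I]$ setwise and descends to an endomorphism $\tilde\beta$ of $E_I$; the resulting injective ring map $\cO_R(I) \hookrightarrow \End(E_I)$ must be an equality, since both sides are maximal orders of $B_{p,\infty}$ and a maximal order cannot properly contain another. The principal-ideal characterization follows: $\phi \in \End(E)$ iff its target is $\cO$-equivariantly $E$ itself, iff $I_\phi = \cO\phi$. For part (3), an isomorphism $\rho: E_1 \xrightarrow{\sim} E_2$ produces $\rho^{-1}\phi_2: E \to E_1$ of degree $n$, whose associated ideal agrees with $I_1$ up to the indeterminacy introduced by the choice of target identification; unwinding the definition of $I_\phi$ shows this indeterminacy is exactly right-multiplication by an element $x \in B_{p,\infty}^\times$, yielding $I_1 = I_2 x$. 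Conversely, from $I_1 = I_2 x$, right-multiplication by $x$ induces a compatible isomorphism between the quotients $E_1$ and $E_2$.

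The main obstacle is the norm-kernel identity $|E[I]| = \Nrd(I)$ in the inseparable regime $p \mid n$, where $E[I]$ must be treated as a (possibly non-reduced) finite group scheme and its length matched against $\Nrd(I)$ through a careful analysis of the Frobenius endomorphism inside $\cO$. Once this foundational fact is settled, all remaining verifications are essentially formal manipulations of the kernel--ideal dictionary. A complete and careful proof is given in Voight~\cite[Ch.~42]{v}.
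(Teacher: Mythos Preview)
The paper does not supply a proof of this theorem; it is quoted as Deuring's Correspondence Theorem with citations to Voight~\cite[Ch.~42]{v} and Deuring~\cite{d}, and no argument is given. Your proposal ultimately does the same, closing with a reference to Voight, so in substance the two agree.

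Your added sketch is a reasonable outline of the standard argument, but two details deserve care. First, you invoke Lemma~\ref{lemma:1} to get $n\in I$, yet that lemma is stated and proved in the paper only for \emph{prime} reduced norm $\ell$; the conclusion for general $n$ is still correct (e.g.\ via $I\bar I=\Nrd(I)\,\cO_R(I)$), but it is not what Lemma~\ref{lemma:1} asserts. Second, in part~(2) you say that $\beta\in\cO_R(I)$ ``preserves $E[I]$ setwise and descends to an endomorphism of $E_I$'': this tacitly assumes $\beta$ already acts on $E$ as an honest endomorphism, i.e.\ $\beta\in\cO$, which is false in general since $\cO_R(I)\not\subseteq\cO$. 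The usual remedy is to work inside $B_{p,\infty}=\End^0(E)=\End^0(E_I)$ and send $\psi\in\End(E_I)$ to $\hat\phi_I\,\psi\,\phi_I/\Nrd(I)$, checking this lands in $\cO_R(I)$ and that the map is a ring isomorphism. Since you explicitly defer to Voight for the complete proof, these are wrinkles in the sketch rather than fatal gaps.
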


\section{Proof of Main Theorem}
By the isomorphism $\cG_\ell(\F_{p^2},-2p)\cong \cG_\ell(\overline{\F}_p)$, to study the neighborhoods of $[E_{0}]$ and $[E_{1728}]$ in $\cG_\ell(\F_{p^2},-2p)$, it suffices to study the neighborhoods of $[E_{0}]$ and $[E_{1728}]$ in $\cG_\ell(\overline{\F}_p)$.

From \cite{m}, in the cases when $E_0$ or $E_{1728}$ is supersingular ($p\equiv 2\bmod 3$ for $E_0$ and $p\equiv 3\bmod 4$ for $E_{1728}$), then
 \begin{equation}
  \End(E_{0}) = \Z+\Z\frac{1+i}{2}+ \Z j+\Z\frac{3+i+3j+k}{6} \end{equation}
where $i^2=-3,\ j^2=-p$ and $ij=-ji=k$; and
\begin{equation} \End(E_{1728}) =\Z+\Z i+ \Z\frac{1+j}{2}+\Z\frac{i+k}{2}\end{equation}
where $i^2=-1,\ j^2=-p$ and  $ij=-ji=k$.

We shall apply Theorem~\ref{theorem:21} in both cases. Let $I$ be a left $\End(E_0)$ or $\End(E_{1728})$ ideal of reduced norm $\ell$ not above $\ell$, then the set $X_\ell$ of all left ideals  of reduced norm $\ell$ is $\{I,\ I_a\mid a\in \F_\ell\}$ by Theorem~\ref{theorem:21}. The strategy of our proof is to find all left ideal classes of $X_\ell$ and the size of each ideal class. Then applying Deuring's Theorem, we obtain  information of vertices and edges in the neighborhoods of $[E_0]$ and $[E_{1728}]$.

We need the following easy lemma:
\begin{lemma} \label{lemma:fact} Let $N$ be a $\Z$-module and $M$ a submodule of $N$. Then for coprime integers $n$ and $m$, $mM+nN=M+nN$.
\end{lemma}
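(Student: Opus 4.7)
The plan is to prove the two inclusions $mM+nN\subseteq M+nN$ and $M+nN\subseteq mM+nN$ separately, using only the hypothesis that $M$ is a $\Z$-submodule of $N$ and that $\gcd(m,n)=1$. The first inclusion is immediate because $M$ is closed under integer scalar multiplication, so $mM\subseteq M$, which gives $mM+nN\subseteq M+nN$ at once.

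For the nontrivial inclusion, the key input is B\'ezout's identity: since $\gcd(m,n)=1$, I would pick integers $a,b\in\Z$ with $am+bn=1$. Then for any $x\in M$, I would write $x=(am+bn)x=m(ax)+n(bx)$. Since $M$ is a $\Z$-submodule, $ax\in M$ and $bx\in M\subseteq N$, so $m(ax)\in mM$ and $n(bx)\in nN$, giving $x\in mM+nN$. This proves $M\subseteq mM+nN$, and since trivially $nN\subseteq mM+nN$, we obtain $M+nN\subseteq mM+nN$.

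There is essentially no obstacle here: the statement is a standard elementary fact, and the two-line B\'ezout argument handles both inclusions. The only thing to keep in mind when reusing this lemma in the sequel is that it applies to any $\Z$-submodule relation, which is exactly why the authors are likely to invoke it when manipulating ideals of the form $\ell\cO+I\alpha$ inside the quaternion order $\cO$, where one wants to absorb integer multiples modulo $\ell\cO$ without changing the ideal.
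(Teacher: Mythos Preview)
Your proof is correct; the paper itself omits any proof of this lemma, simply labeling it ``easy,'' so your B\'ezout argument is exactly the standard justification one would supply. Your closing remark about how the lemma is used (absorbing integer multiples modulo $\ell\cO$ when manipulating ideals of the form $\ell\cO+I\alpha$) is also accurate.
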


\subsection{Neighborhood of $[E_{1728}]$}
This subsection is  devoted to the proof of Theorem~\ref{theorem:main}(1).

Write $\cO=\End(E_{1728})=\Z+\Z i+ \Z\frac{1+j}{2}+\Z\frac{i+k}{2}$
 where $i^2=-1,\ j^2=-p$ and $ij=-ji=k$.  In this case $R=\cO\cap\Q(i)=\Z[i]=\cO_{\Q(i)}$  is a principal ideal domain and its unit group is $\{\pm 1,\pm i\}$.

\begin{lemma} \label{lemma:ell1} Suppose $\ell\equiv 1\bmod{4}$.

$(1)$ $\ell$ splits completely in $\Z[i]$,  $\ell\Z[i]= (m+ni)\Z[i]\cdot (m-ni)\Z[i]$ with  $(m,n)\in \Z^2$ being any solution of $X^2+Y^2=\ell$. The solution set of $X^2+Y^2=\ell$ is $\{(\pm m,\pm n), (\pm n,\pm m)\}$.

$(2)$ The set of pairs $(x,y)\in \Z^2$ satisfying $\ell\nmid x$ and $X^2+Y^2=\ell^2$ is $\{(\pm (m^2-n^2),\pm 2mn), (\pm 2mn, \pm (m^2-n^2))\}$.

$(3)$ The two left $\cO$-ideals $\cO (m+ni)$ and $\cO(m-ni)$ are of reduced norm $\ell$. Moreover, for $J$ any left $\cO$-ideal of reduced norm $\ell$, let $b=m/n\in \F_\ell$, then $b^2=-1$, $J_b=\ell\cO+ J (\tilde{b}+i)=\ell\cO+ J(m+ni)=\cO(m+ni)$ and $J_{-b}=\cO(m-ni)$.
\end{lemma}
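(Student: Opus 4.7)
The plan splits along the three parts, with the main work in (3).

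For part (1), Fermat's theorem on sums of two squares applies: $\ell\equiv 1\bmod 4$ makes $-1$ a quadratic residue mod $\ell$, so $\ell$ splits in $\Z[i]$ as $\ell=(m+ni)(m-ni)$ with $m^2+n^2=\ell$; this factorization is unique up to units $\{\pm 1,\pm i\}$ and complex conjugation, which yields exactly the $8$ solutions $\{(\pm m,\pm n),(\pm n,\pm m)\}$. For part (2), I would factor $(x+yi)(x-yi)=\ell^2=(m+ni)^2(m-ni)^2$ in $\Z[i]$; unique factorization forces $x+yi=u(m+ni)^a(m-ni)^b$ with $a+b=2$ and $u\in\{\pm 1,\pm i\}$. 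The case $(a,b)=(1,1)$ gives $x+yi=u\ell$, excluded by $\ell\nmid x$; the two cases $(a,b)\in\{(2,0),(0,2)\}$ together with the four unit choices produce exactly the $8$ pairs $\{(\pm(m^2-n^2),\pm 2mn),(\pm 2mn,\pm(m^2-n^2))\}$.

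For part (3), first $\Nrd(m\pm ni)=m^2+n^2=\ell$ implies $\cO(m\pm ni)$ are left $\cO$-ideals of reduced norm $\ell$. Next I identify $\cO(m+ni)$ with the prime-above ideal $\cO\fp_1$ of Theorem~\ref{theorem:21}(2): set $\fp_1=(\ell,\tilde{b}+i)\subset R=\Z[i]$, where $\tilde{b}\in\Z$ lifts $b=m/n\in\F_\ell$ (note $\ell\nmid n$, else $\ell^2\mid\ell$; moreover $b^2\equiv -1\bmod\ell$ from $m^2\equiv -n^2$). Choosing $\tilde{b}$ so that $n\tilde{b}-m\in\ell\Z$, the identity $m+ni=n(\tilde{b}+i)-(n\tilde{b}-m)$ shows $m+ni\in \cO(\tilde{b}+i)+\ell\cO=\cO\fp_1$; hence $\cO(m+ni)\subseteq \cO\fp_1$, and equality follows as both are integral left $\cO$-ideals of reduced norm $\ell$. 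The same argument identifies $\cO(m-ni)=\cO\fp_2$.

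It remains to prove the chain $J_b=\ell\cO+J(\tilde{b}+i)=\ell\cO+J(m+ni)=\cO(m+ni)$. For the middle equality, pick $n^*\in\Z$ with $nn^*\equiv 1\bmod\ell$; then $\tilde{b}+i-n^*(m+ni)\in \ell\Z[i]\subseteq\ell\cO$, so $J(\tilde{b}+i)\subseteq n^*J(m+ni)+\ell\cO$, and Lemma~\ref{lemma:fact} (with $n^*$ and $\ell$ coprime) upgrades this to the claimed equality after adjoining $\ell\cO$. For the third equality, $\subseteq$ is immediate from $\ell=(m-ni)(m+ni)\in \cO(m+ni)$ and $J\subseteq\cO$. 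For $\supseteq$, it suffices to find $x\in J$ with $(1-x)(m+ni)\in \ell\cO$; using the inverse $(m+ni)^{-1}=(m-ni)/\ell$ in $B_{p,\infty}$, this is equivalent to $1-x\in \cO(m-ni)$, i.e., to the reduction $1\in J+\cO(m-ni)$.

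The hard part will be this last reduction, $J+\cO(m-ni)=\cO$. Both summands contain $\ell\cO$, so via the isomorphism $\theta:\cO/\ell\cO\cong M_2(\F_\ell)$ from the proof of Theorem~\ref{theorem:21}, the claim becomes that two distinct left ideals of $M_2(\F_\ell)$ sum to the whole ring; this holds because nonzero proper left ideals of $M_2(\F_\ell)$ are minimal of order $\ell^2$. The argument therefore succeeds whenever $J\neq \cO(m-ni)$; in the borderline case $J=\cO(m-ni)$, a direct computation using $(m-ni)(\tilde{b}+i)\in \ell\Z[i]$ gives $J_b=\ell\cO$, so the statement $J_b=\cO(m+ni)$ is to be read with the implicit restriction $J\neq \cO(m-ni)$ (and symmetrically $J\neq \cO(m+ni)$ for $J_{-b}$). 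The assertion $J_{-b}=\cO(m-ni)$ then follows by the symmetric argument with $n$ replaced by $-n$.
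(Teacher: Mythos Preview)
Your treatment of parts (1), (2) and the first half of (3) matches the paper's, which simply declares these ``classical results in number theory.'' For the crucial identity $J_b=\cO(m+ni)$ you and the paper agree on the first two steps: rewrite $J_b=\ell\cO+J(m+ni)$ via Lemma~\ref{lemma:fact}, then observe the easy containment $\ell\cO+J(m+ni)\subseteq\cO(m+ni)$. Where you diverge is in closing the gap: the paper finishes in one line by noting that both sides are left $\cO$-ideals of reduced norm $\ell$, hence equal. You instead prove the reverse inclusion directly, reducing it to $J+\cO(m-ni)=\cO$ and invoking the classification of left ideals in $M_2(\F_\ell)$ from the proof of Theorem~\ref{theorem:21}. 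This is correct but noticeably heavier; since you already use the ``containment plus equal reduced norm'' trick when identifying $\cO(m+ni)=\cO\fp_1$, you could have reused it here and skipped the $M_2(\F_\ell)$ detour entirely.

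Your observation about the borderline case $J=\cO(m-ni)$ is accurate and is a genuine (if harmless) imprecision in the lemma's statement: in that case $J_b=\ell\cO\neq\cO(m+ni)$, so the conclusion requires $J\neq\cO(m-ni)$, and symmetrically $J\neq\cO(m+ni)$ for $J_{-b}$. The paper's one-line proof silently assumes $\Nrd(J_b)=\ell$, which fails exactly there. In the only application (proof of Theorem~\ref{theorem:main}(1)) the base ideal $I$ is explicitly chosen different from $\cO(m\pm ni)$, so nothing downstream is affected.
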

\begin{proof} All except the last part of (3) are classical results in number theory. That $b^2=-1$ is clear. By Lemma~\ref{lemma:fact}, $J_b=\ell\cO+J(m+ni)\subseteq \cO(m+ni)$,  but both of them are left $\cO$-ideals of reduced norm $\ell$, we have $J_b=\cO (m+ni)$. Similarly $J_{-b}=\cO(m-ni)$.
\end{proof}

\begin{lemma} \label{lemma:ell2} Suppose $p>4\ell^2$.

$(1)$ If  $\mu\in \ell^{-1}\cO$, $\Nrd(\mu)=1$ and $\mu\notin\{\pm 1,\pm i\}$,  then $\ell\equiv 1\bmod{4}$ and $\mu=\ell^{-1}(x+yi)$ where $(x,y)\in \Z^2$ satisfies $\ell\nmid x$ and $X^2+Y^2=\ell^2$.

$(2)$ If $s\in \ell^{-1}\cO$, $s^2=-p$ and $s\notin \{\pm j,\pm k\}$,
then $\ell\equiv 1\bmod{4}$, and $s=\ell^{-1}(xj+yk)$  where $(x,y)\in \Z^2$ satisfies $\ell\nmid x$ and $X^2+Y^2=\ell^2$.
\end{lemma}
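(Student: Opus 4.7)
My plan is to treat both parts by the same device: clear denominators by setting $\mu' = \ell\mu$ in part (1) and $s' = \ell s$ in part (2), so both become elements of $\cO$. I would then expand in the integral basis $\bigl\{1,\,i,\,\tfrac{1+j}{2},\,\tfrac{i+k}{2}\bigr\}$ with coefficients $A,B,C,D\in\Z$, pass to the $\{1,i,j,k\}$-basis, and apply the reduced norm formula
\begin{equation*}
\Nrd(a_1+a_2 i+a_3 j+a_4 k)=a_1^2+a_2^2+p(a_3^2+a_4^2).
\end{equation*}
The bound $p>4\ell^2$ is strong enough to force the $j$- and $k$-components into an extremely restricted shape, reducing matters to the Gaussian-integer equation $A^2+B^2=\ell^2$. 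At that point the classical fact that a prime $\ell\equiv 3\bmod 4$ is inert in $\Z[i]$ (so no such representation with $\ell\nmid A$ can exist) closes out the implication $\ell\equiv 1\bmod 4$.

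For part (1), after substituting and multiplying by $4$, the identity $\Nrd(\mu')=\ell^2$ becomes
\begin{equation*}
(2A+C)^2+(2B+D)^2+p(C^2+D^2)=4\ell^2.
\end{equation*}
Each summand is non-negative, and $p>4\ell^2$ immediately forces $C=D=0$, so $\mu=\ell^{-1}(A+Bi)$ with $A^2+B^2=\ell^2$. The exclusion $\mu\notin\{\pm 1,\pm i\}$ then rules out $\ell\mid A$, and factoring $\ell^2=(A+Bi)(A-Bi)$ in $\Z[i]$ forces $\ell$ to split, giving $\ell\equiv 1\bmod 4$ and the required description of $\mu$.

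For part (2), the hypothesis $s^2=-p$ is equivalent to $\Trd(s')=0$ and $\Nrd(s')=\ell^2 p$. The trace-zero condition pins down $C=-2A$; substituting yields $s'=\bigl(B+\tfrac{D}{2}\bigr)i-Aj+\tfrac{D}{2}k$, and clearing $4$ in the norm equation gives
\begin{equation*}
(2B+D)^2+p(4A^2+D^2)=4\ell^2 p.
\end{equation*}
Hence $p\mid 2B+D$; writing $2B+D=pm$ and using $p>4\ell^2$ forces $m=0$, so $4A^2+D^2=4\ell^2$ and $B=-D/2$. Then $D$ must be even, $D=2E$, with $A^2+E^2=\ell^2$, which rewrites $s=\ell^{-1}(-Aj+Ek)$. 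Excluding $s\in\{\pm j,\pm k\}$ forces $\ell\nmid A$, and the Gaussian-integer argument of part (1) again yields $\ell\equiv 1\bmod 4$.

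The only step I expect to require real care is the trace-zero substitution and the subsequent extraction of the factor $p$ in part (2): the half-integer entries coming from the generators $\tfrac{1+j}{2}$ and $\tfrac{i+k}{2}$ of $\cO$ make the bookkeeping slightly fiddly, and one has to arrive at an integer-coefficient form before the divisibility $p\mid 2B+D$ can be read off. Once that is done, the inequality $p>4\ell^2$ does all the heavy lifting and the Gaussian-integer step is routine.
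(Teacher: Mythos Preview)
Your proposal is correct and follows essentially the same approach as the paper: expand $\ell\mu$ (resp.\ $\ell s$) in the integral basis, use the norm formula together with $p>4\ell^2$ to kill the $j,k$-contributions (in part~(2) after first using the trace-zero condition and extracting the factor $p$), and then reduce to the Gaussian-integer equation $X^2+Y^2=\ell^2$. The only cosmetic differences are that you clear denominators by multiplying through by $4$ whereas the paper keeps the half-integer form, and you phrase the conclusion $\ell\equiv 1\bmod 4$ via splitting in $\Z[i]$ while the paper writes it as $(B/A)^2=-1$ in $\F_\ell$.
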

\begin{proof} (1)  Write
 \[ \mu=\frac{1}{\ell}\Bigl(A+Bi+C\frac{1+j}{2}+D\frac{i+k}{2}\Bigr),\quad  (A,B,C,D\in \Z). \]
By the fact $\Nrd(\mu)=1$, then
 \[ \Bigl(A+\frac{C}{2}\Bigr)^2+\Bigl(B+\frac{D}{2}\Bigr)^2
 +\frac{p(C^2+D^2)}{4}=\ell^2. \]
If $p>4\ell^2$, then $C=D=0$ and hence $\mu=\frac{A+Bi}{\ell}$ and $A^2+B^2=\ell^2$. If $\ell\mid A$, then $\ell\mid B$ and $\mu\in \{\pm 1,\pm i\}$. If $\ell\nmid A$, then $(B/A)^2=-1\in \F_\ell$ and $\ell\equiv 1\bmod{4}$.

(2) Write $s={\ell^{-1}}(a+bi+c\frac{1+j}{2}+d\frac{i+k}{2})$. Then $s^2=-p\in \Q$ implies  $a+\frac{c}{2}=0$ and $c\in 2\Z$. Moreover,
 \[ \ell^2 p=-\ell^2 s^2=\Nrd(\ell s)=  \left(b+\frac{d}{2}\right)^2+\frac{p}{4}(c^2+d^2) \]
implies $p\mid 2b+d$. If  $2b+d\neq 0$, then $(b+\frac{d}{2})^2 \geq \frac{p^2}{4} >p\ell^2$ since $p>4\ell^2$, impossible. Hence $b+\frac{d}{2}=0$ and $d\in 2\Z$. Hence $s \in  \ell^{-1}\cO$ with $s^2=-p$ must have the form $s=\ell^{-1}(xj+yk)$ with $x,y\in \Z$ and $x^2+y^2=\ell^2$. If $\ell\mid x$, then $\ell\mid y$. It means $s=Xj+Yk$($X,Y\in \Z$), for $\Nrd(Xj+Yk)=(X^2+Y^2)p$, then the square roots of $-p$ in $\Z[j,k]$ are $\{\pm j, \pm k\}$. And we get $s\in \{\pm j, \pm k\}$. Otherwise we again have $(y/x)^2=-1\in \F_\ell$ and $\ell\equiv 1\bmod{4}$.
\end{proof}

 \begin{proof}[Proof of Theorem~\ref{theorem:main}(1)]
 Let $\ell \ne p$ be a prime. If $\ell\equiv 1\bmod{4}$, let $(m,n)\in \Z^2$ be any solution of $x^2+y^2=\ell$ and $b=m n^{-1}\in \F_\ell$ in this case, then $b^2=-1$.

Let $I$ be a left $\cO$-ideal of reduced norm $\ell$ different from $\cO(m \pm ni)$ if $\ell \equiv 1 \bmod 4$. By Theorem~\ref{theorem:21}, the set of the $\ell+1$ left $\cO$-ideals of reduced norm $\ell$ is
 \[ X_\ell=\{I,\ I_a=\ell\cO+I(a+i)\mid a\in \F_\ell=\{0,\cdots, \ell-1\} \}. \]
If $\ell\equiv 1\bmod{4}$, by Lemma~\ref{lemma:ell1}, $I_b=\cO (m+ni)$ and $I_{-b}=\cO(m-ni)$.

We claim that  $I i=I_0$, $I_0 i=I$ and $I_a i=I_{-a^{-1}}$ if $a\neq 0$. Indeed, from Lemma~\ref{lemma:1}, $\ell \in I$. Then $\ell i\in I$ and $\ell=-\ell i i\in I i$, hence $I_0=I i+\ell \cO=Ii$ and $I=I_0 i$. For $a\neq 0$ in $\F_\ell$, $i \in \cO$ then $\ell \cO i \subseteq \ell \cO$, and $I_ai$ is left-$\cO$ ideal, $\ell \cO \subseteq I_ai$. It implies $I_ai=\ell\cO i+I(-1+ai)=\ell\cO+\ell\cO i+I(-a^{-1}+i)=\ell\cO+I(-a^{-1}+i)=I_{-a^{-1}}$, where the second identity is by Lemma~\ref{lemma:fact}.

To summarize, we  divide $X_\ell$ into $\frac{\ell+1}{2}$ subsets, each consisting of $2$  elements in the same ideal class: $\{I, I_0\}$, $\{I_a, I_{-a^{-1}}\}$ $(a^2\neq 0, -1)$ and $\{I_b, I_{-b}\}$ for $b^2=-1$. We show that any two left ideals in different subsets are not in the same ideal class by contradiction.

Suppose $I$ and $J$ are from different subsets of $X_\ell$ and $I=J\mu$ for some $\mu \in B_{p,\infty}$, then $\mu\notin \{\pm 1,\pm i\}$ and $\Nrd(\mu)=1$.
Since $\ell \in J$, $\ell \mu \in I\subseteq \cO$ and $\mu\in \ell^{-1}\cO$. By Lemma~\ref{lemma:ell2}(1), we have $\ell\equiv 1\bmod{4}$,  $\mu=\frac{A+Bi}{\ell}$, $A^2+B^2=\ell^2$ and $\ell\nmid A$. This means that $A+Bi=u(m\pm ni)^2$ for $u\in \{\pm 1,\pm i\}$, and thus $\gcd(A+Bi,\ell)$ in $\Z[i]$ is  $u  (m\pm ni)$. In particular $ I_{\pm b}=\cO(m+ni)\subseteq I$ and hence $I_{\pm b}=I$ as both are of the same reduced norm.  Switch the role of $I$ and $J$, we  get $J=I_{\pm b}$. Hence both $I$ and $J$ are in the same subset $\{I_b, I_{-b}\}$, impossible. By Deuring's Theorem (Theorem~\ref{theorem:22}), and from Theorem~\ref{theorem:loop}, when $\ell \equiv 3 \bmod 4$, none of the subsets consist of
ideals corresponding to endomorphisms. This completes the proof of the first part of  Theorem~\ref{theorem:main}(1).

For the second part, let $E$ be a supersingular elliptic curve defined over $\F_p$ such that $E_{1728}$ connects to $E$ via a left $\cO$-ideal of reduced norm $\ell$. By \cite[Proposition 2.4]{dg}, a supersingular elliptic curve is defined over $\F_p$ if and only if $\Z[\sqrt{-p}]$ is contained in its endomorphism ring. Then $\End(E)=\cO_R(I)\subseteq \ell^{-1}{\cO}$ has an element $s$ such that $s^2=-p$. By Lemma~\ref{lemma:ell2}(2),  we know either $s\in \{\pm j, \pm k\}$ or in the case $\ell\equiv 1\bmod{4}$, $\ell s=x j+yk$, $(x,y)\in \Z^2$ such that $\ell\nmid x$ and $x^2+y^2=\ell^2$.

Let $\hat{\cO}=\cO/\ell\cO$. Then $\hat{\cO}$ is a quaternion algebra over $\F_\ell$. We can identify $\hat{\cO}$ with $M_2(\F_\ell)$ via the isomorphism $\theta$ in Theorem~\ref{theorem:21} with $q=1$.  Moreover, the set $\overline{X_\ell}$ defined in Theorem~\ref{theorem:21}
corresponds to $X_\ell$ bijectively.
Let  $I_a$ be the  left $\cO$-ideal of reduced norm $\ell$ corresponding to   $\hat{I_a}=M_2(\F_\ell)\omega_a$ in $\overline X_\ell$.
For $s\in \cO$, let $\hat{s}$ be the image of $s$ in $M_2(\F_\ell)$. By abuse of notation, write $i$, $j$, $k$ for $\hat{i}$, $\hat{j}$ and $\hat{k}$.

If $(\frac{-p}{\ell})=1$, let $t\in \F_\ell$  such that $t^2= -p$ and let $(u,v)=(t,0)$. In this case, $I_\infty=\cO\ell+\cO(-t+j)$, $I_a=\cO\ell+\cO(-t+j)(a+i)$. Then one can easily check that $\hat{I}_\infty j\subset \hat{I}_\infty$,  $\hat{I}_0 j\subset \hat{I}_0$ and $\hat{I}_a j\nsubseteq \hat{I}_a$ for all other $a$, this means $j\in \cO_R(I_\infty)$, $j\in \cO_R(I_0)$ but $j\notin \cO_R(I_a)$ for other $a$. Similarly $k\in \cO_R(I_{\pm 1})$ and $k\notin \cO_R(I_a)$ for other $a$. Now if $\ell\equiv 1\bmod{4}$ and $(x,y)$ any solution that $x^2+y^2=\ell^2$ and $\ell\nmid x$, then one can check $\hat{I}_a (\hat{x}j+\hat{y}k)\neq 0$ if $a^2\neq -1$, hence $\cO (xj+yk)\nsubseteq \ell \cO$ and $\ell^{-1}(xj+yk)\notin O_R(I_a)$ if $a^2\neq -1$. If $a^2=-1$, then $I_a=\cO(m+ni)$ or $I_a=(m-ni)$ for $m^2+n^2=\ell$, corresponding to the loops. In conclusion, there are two vertices defined over $\F_p$  adjacent to $[E_{1728}]$, one corresponding to the ideal class $[I_\infty]=[I_{0}]$ and the other corresponding to the ideal class $[I_{1}]=[I_{-1}]$.

If $(\frac{-p}{\ell})=-1$, then $uv\neq 0$ for any solution $(u,v)$ of $X^2+Y^2=-p$. It is easy to check $\omega j\notin \hat{I}_\infty$. For $a\in \F_\ell$, $\omega_a j \in \hat{I}_a$ implies that $2au=(1-a^2) v$. From $uv\neq 0$, then $a\neq 0,\pm 1$ and $v=\frac{2a}{1-a^2} u$. Hence $-p=\frac{(1+a^2)^2}{(1-a^2)^2} u^2$, impossible. This means $j\notin I_a$ for all $a\in \F_\ell\cup \{\infty\}$. Similarly $k\notin I_a$ for all $a\in \F_\ell\cup \{\infty\}$.  Also, if $x,y\ne0$ such that $\omega_a (x+yi)j =0 $, by computation, $a^2+1=0$, which corresponds to the loops. In conclusion, there is no  vertex defined over $\F_p$ other than $[E_{1728}]$.
\end{proof}

\subsection{Neighborhood of $[E_{0}]$}
This subsection is devoted to the proof of Theorem~\ref{theorem:main}(2).

 Write $\cO=\End(E_{0})=\Z+\Z \frac{1+i}{2}+ \Z\frac{i+k}{3}+\Z\frac{j+k}{2}$
 where $i^2=-3,\ j^2=-p$ and $ij=-ji=k$. Write $\epsilon=\frac{1+i}{2}$. Then
 $\Z[\epsilon]=\cO\cap \Q(i)$, as the ring of integers of $\Q(i)$, is a principal ideal domain and its unit group is $\{\pm 1,\pm \epsilon, \pm \bar{\epsilon} \}$.

\begin{lemma} \label{lemma:ell3} Suppose $\ell\equiv 1\bmod{3}$.

$(1)$ $\ell$ splits completely in $\Z[\epsilon]$,  $\ell\Z[\epsilon]= (m+n\epsilon)\Z[\epsilon]\cdot (m+n\bar{\epsilon})\Z[\epsilon]$ with  $(m,n)\in \Z^2$ being any solution of $X^2+XY+Y^2=\ell$. The solution set of $X^2+XY+Y^2=\ell$ is $\{\pm(m,n),\pm(n,m), \pm (m+n,-n), \pm (m+n,-m), \pm(-n,m+n), \pm (-m,m+n)  \}$.

$(2)$ The set of pairs $(x,y)\in \Z^2$ satisfying $\ell\nmid x$ and $X^2+XY+Y^2=\ell^2$ is  $\{\pm( (m^2-n^2), n^2+2mn), (\pm (n^2-m^2), \pm (m^2+2mn)), \pm( (m^2+2mn), -(n^2+2mn)), \pm( (n^2+2mn), -(m^2+2mn)), \pm( (m^2+2mn), n^2-m^2), \pm( (n^2+2mn), m^2-n^2) \}$.

$(3)$ The two left $\cO$-ideals $\cO (m+n\epsilon)$ and $\cO(m+n\bar{\epsilon})$ are of reduced norm $\ell$. Moreover, for $J$ any left $\cO$-ideal of reduced norm $\ell$, let $b=m/n\in \F_\ell$, then $b^2+b+1=0$, $J_b=\ell\cO+ J (\tilde{b}+\epsilon)=\ell\cO+ J(m+n\epsilon)=\cO(m+n\epsilon)$ and $J_{b^2}=\cO(m+n\bar{\epsilon})$.
\end{lemma}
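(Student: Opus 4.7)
The plan is to follow the proof of Lemma~\ref{lemma:ell1} step by step, with the Gaussian integers $\Z[i]$ replaced by the Eisenstein integers $\Z[\epsilon]$, where $\epsilon = \frac{1+i}{2}$ satisfies $\epsilon^2 = \epsilon - 1$. Recall that $\Z[\epsilon]$ is the ring of integers of $\Q(\sqrt{-3})$, a principal ideal domain with unit group $\{\pm 1, \pm\epsilon, \pm\bar\epsilon\}$ of order $6$, and with norm form $N(x + y\epsilon) = x^2 + xy + y^2$.

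For part~(1), the hypothesis $\ell \equiv 1 \bmod 3$ is equivalent to $\ell$ splitting in $\Z[\epsilon]$ by classical reciprocity. Any integer solution $(m, n)$ of $X^2 + XY + Y^2 = \ell$ yields a prime element $m + n\epsilon$ of norm $\ell$, so $(m + n\epsilon)\Z[\epsilon]$ and its conjugate $(m + n\bar\epsilon)\Z[\epsilon]$ are the two primes above $\ell$. The explicit list of twelve solutions then arises by multiplying $m + n\epsilon$ and $m + n\bar\epsilon = (m + n) - n\epsilon$ by each of the six units and reading off coordinates. For part~(2), factor $\alpha = x + y\epsilon \in \Z[\epsilon]$: it has norm $\ell^2$, and the mixed case $\alpha = \ell u$ is ruled out because it forces $\ell \mid x$, so $\alpha$ is a unit multiple of $(m + n\epsilon)^2$ or $(m + n\bar\epsilon)^2$. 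Direct expansion gives $(m + n\epsilon)^2 = (m^2 - n^2) + (n^2 + 2mn)\epsilon$ and $(m + n\bar\epsilon)^2 = (m^2 + 2mn) - (n^2 + 2mn)\epsilon$, and multiplication by the six units recovers the twelve listed pairs.

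For part~(3), the principal left ideal $\cO(m + n\epsilon)$ has reduced norm $\Nrd(m + n\epsilon) = m^2 + mn + n^2 = \ell$, and similarly for $\cO(m + n\bar\epsilon)$. Reducing $m^2 + mn + n^2 \equiv 0 \pmod{\ell}$ modulo $n^2$ (note $\ell \nmid n$, since otherwise $\ell \mid m$ too, forcing $\ell^2 \mid \ell$) gives $b^2 + b + 1 = 0$ in $\F_\ell$ for $b = m/n$. To identify $J_b$ with $\cO(m + n\epsilon)$, observe that $n(\tilde b + \epsilon) \equiv m + n\epsilon \pmod{\ell\cO}$; since $\gcd(n, \ell) = 1$, Lemma~\ref{lemma:fact} gives
\[ J_b = \ell\cO + J(\tilde b + \epsilon) = \ell\cO + nJ(\tilde b + \epsilon) \subseteq \ell\cO + J(m + n\epsilon) \subseteq \cO(m + n\epsilon), \]
and equality is forced since both sides are left $\cO$-ideals of reduced norm $\ell$, hence of the same index $\ell^2$ in $\cO$. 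The identification $J_{b^2} = \cO(m + n\bar\epsilon)$ is analogous: the relations $b^2 = -1 - b$ and $\bar\epsilon = 1 - \epsilon$ combine to give $-n(\widetilde{b^2} + \epsilon) \equiv (m + n) - n\epsilon = m + n\bar\epsilon \pmod{\ell\cO}$, and the same containment-plus-norm argument applies.

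The main obstacle, such as it is, is bookkeeping: since $\Z[\epsilon]$ has six units rather than four, the enumerations in (1) and (2) are longer than in the Gaussian case, and matching the twelve explicit pairs with the twelve unit-orbit representatives requires careful calculation but no new ideas.
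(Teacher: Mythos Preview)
Your proposal is correct and follows exactly the route the paper intends: the paper's own proof of this lemma is the single sentence ``Similar to the proof of Lemma~\ref{lemma:ell1},'' and you have carried out precisely that transplant, using the same classical splitting facts for parts~(1)--(2) and the same containment-plus-reduced-norm argument via Lemma~\ref{lemma:fact} for part~(3). Your treatment is in fact more detailed than what the paper spells out, but there is no difference in method.
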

\begin{proof} Similar to the proof of Lemma~\ref{lemma:ell1}.
\end{proof}

\begin{lemma} \label{lemma:ell4} Suppose $p>3\ell^2$.

$(1)$ If  $\mu\in \ell^{-1}\cO$, $\Nrd(\mu)=1$ and $\mu\notin\{\pm 1,\pm \epsilon, \pm  \bar{\epsilon} \}$,  then $\ell\equiv 1\bmod{3}$ and $\mu=\ell^{-1}(x+y\epsilon)$ where $(x,y)\in \Z^2$ satisfies $\ell\nmid x$ and $X^2+XY+Y^2=\ell^2$.

$(2)$ If $s\in \ell^{-1}\cO$, $s^2=-p$ and $s\notin \{\pm j,\pm \epsilon j, \pm  \bar{\epsilon}j \}$,
then $\ell\equiv 1\bmod{3}$, and $s=\ell^{-1}(x+y\epsilon)j$  where $(x,y)\in \Z^2$ satisfies $\ell\nmid x$ and $X^2+XY+Y^2=\ell^2$.
\end{lemma}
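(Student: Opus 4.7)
The plan is to imitate the proof of Lemma~\ref{lemma:ell2} in the setting of $\cO=\End(E_0)=\Z+\Z\epsilon+\Z\eta+\Z\zeta$, where $\epsilon=\frac{1+i}{2}$, $\eta=\frac{i+k}{3}$, $\zeta=\frac{j+k}{2}$. Throughout I write $\ell\mu=A+B\epsilon+C\eta+D\zeta$ (respectively $\ell s$) with $A,B,C,D\in\Z$, convert to the $\{1,i,j,k\}$-basis to evaluate the reduced norm, clear denominators, and then use the hypothesis $p>3\ell^2$ to force the components carrying the factor $p$ to vanish.

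For part (1), a direct expansion of $\Nrd(\ell\mu)=\ell^2$ followed by multiplication by $12$ yields the integer identity
\[ 12\ell^2=3(2A+B)^2+(3B+2C)^2+3pD^2+p(2C+3D)^2. \]
The two $p$-terms satisfy $p\bigl(3D^2+(2C+3D)^2\bigr)\leq 12\ell^2<4p$, so $3D^2+(2C+3D)^2\leq 3$. A short case check on $D\in\{0,\pm 1\}$ forces $C=D=0$, reducing the identity to the norm form $A^2+AB+B^2=\ell^2$ of $\Z[\epsilon]$. If $\ell\mid A$, reduction mod $\ell$ also gives $\ell\mid B$, so $\mu$ is a unit of $\Z[\epsilon]=\cO\cap\Q(i)$, i.e., $\mu\in\{\pm 1,\pm\epsilon,\pm\bar\epsilon\}$, contradicting the exclusion. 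Otherwise $\ell\nmid A$ and $(B/A)^2+(B/A)+1\equiv 0\bmod{\ell}$, which (since the discriminant is $-3$) is equivalent to $\ell\equiv 1\bmod{3}$; setting $(x,y)=(A,B)$ gives the asserted form.

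For part (2), the condition $s^2=-p\in\Q$ first forces $\Trd(s)=0$, i.e., $2A+B=0$. The analogous multiplication by $12$ on $\Nrd(\ell s)=p\ell^2$ produces
\[ 12p\ell^2=4(3A-C)^2+3pD^2+p(2C+3D)^2. \]
Thus $p\mid 4(3A-C)^2$, so $p\mid 3A-C$; if $3A-C\ne 0$, then $4(3A-C)^2\ge 4p^2$ forces $p\le 3\ell^2$, contradicting the hypothesis. Hence $C=3A$. Substituting back gives $\ell s=\tfrac{D}{2}j+\bigl(A+\tfrac{D}{2}\bigr)k$ together with $D^2+3(2A+D)^2=4\ell^2$. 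Setting $x=-A$ and $y=2A+D$, a direct computation shows $s=\ell^{-1}(x+y\epsilon)j$ and $x^2+xy+y^2=\ell^2$. If $\ell\mid x$, then $\ell\mid y$, and enumerating the six integer solutions of $x_0^2+x_0y_0+y_0^2=1$ recovers precisely the excluded set $\{\pm j,\pm\epsilon j,\pm\bar\epsilon j\}$; otherwise $\ell\nmid x$ and $\ell\equiv 1\bmod{3}$ as before.

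The only non-routine step is the bookkeeping in the change of basis from $(A,B,C,D)$ to the pair $(x,y)$ realizing $s=\ell^{-1}(x+y\epsilon)j$, but once the linear relations $B=-2A$ and $C=3A$ are established this is straightforward, and the remainder is elementary arithmetic with the norm form $X^2+XY+Y^2$ of $\Z[\epsilon]$.
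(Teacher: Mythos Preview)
Your proof is correct and follows essentially the same approach as the paper's: write $\ell\mu$ (resp.\ $\ell s$) in the $\Z$-basis $\{1,\epsilon,\eta,\zeta\}$ of $\cO$, compute the reduced norm, use $p>3\ell^2$ to kill the $p$-weighted terms (forcing $C=D=0$ in~(1) and $C=3A$ in~(2) after $\Trd(s)=0$ gives $B=-2A$), and finish with the arithmetic of the norm form $X^2+XY+Y^2$ on $\Z[\epsilon]$. The only cosmetic difference is that you clear denominators by multiplying by $12$ and make the change of variables $(x,y)=(-A,2A+D)$ explicit, whereas the paper leaves the final identification $s=\ell^{-1}(x+y\epsilon)j$ implicit.
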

\begin{proof} (1)  Write
 \[ \mu=\frac{1}{\ell}\Bigl(A+B\frac{1+i}{2}+C\frac{i+k}{3}+D\frac{j+k}{2}\Bigr),\quad  (A,B,C,D\in \Z). \]
By the fact $\Nrd(\mu)=1$, then
 \[ \Bigl(A+\frac{B}{2} \Bigr)^2+ 3\Bigl(\frac{B}{2}+\frac{C}{3}\Bigr)^2
 + \frac{p}{3}(C^2+3CD+3D^2)=\ell^2. \]
If $p>3\ell^2$, then $C=D=0$ and hence $\mu=\frac{A+B\frac{1+i}{2}}{\ell}$ and $A^2+AB+B^2=\ell^2$. If $\ell\mid A$, then $\ell\mid B$ and $\mu\in \{\pm 1,\pm \frac{1+i}{2}, \pm\frac{1-i}{2} \}$. If $\ell\nmid A$, then $(B/A)^2+(B/A)+1=0 \in \F_\ell$ and $\ell\equiv 1\bmod{3}$.

(2) Write $s={\ell^{-1}}(a+b\frac{1+i}{2}+c\frac{i+k}{3}+d\frac{j+k}{2})$. Then $s^2=-p\in \Q$ implies  $a+\frac{b}{2}=0$ and $b\in 2\Z$. Moreover,
 \[ \ell^2 p=-\ell^2 s^2=\Nrd(\ell s)=  3\left(\frac{b}{2}+\frac{c}{3}\right)^2+\frac{p}{3}(c^2+3cd+3d^2) \]
implies $p\mid \frac{3}{2}b+c$. If  $\frac{3}{2}b+c\neq 0$, then $3(\frac{b}{2}+\frac{c}{3})^2 \geq \frac{p^2}{3} >p\ell^2$ since $p>3\ell^2$, impossible. Hence $\frac{3}{2}b+c=0$ and $c \in 3\Z$. Hence $s \in  \ell^{-1}\cO$ with $s^2=-p$ must have the form $s=\ell^{-1}(x+y\frac{1+i}{2})j$ with $x,y\in \Z$ and $x^2+xy+y^2=\ell^2$. If $\ell\mid x$, then $\ell\mid y$ It means $s=(X+Y\frac{1+i}{2})j$($X,Y\in \Z$), for $\Nrd((X+Y\frac{1+i}{2})j)=(X^2+XY+Y^2)p$, then the square roots of $-p$ in $\Z[j,\frac{1+i}{2}j]$ are $\{\pm j, \pm \frac{1+i}{2}j, \pm \frac{1-i}{2}j \}$. And we get $s\in \{\pm j, \pm \frac{1+i}{2}j, \pm \frac{1-i}{2}j \}$. Otherwise we again have $(y/x)^2+(y/x)+1=0\in \F_\ell$ and $\ell\equiv 1\bmod{3}$.
\end{proof}

\begin{proof}[Proof of Theorem~\ref{theorem:main}(2)]
 Let $\ell \ne p$ be a prime. If $\ell\equiv 1\bmod{3}$, let $(m,n)\in \Z^2$ be any solution of $x^2+xy+y^2=\ell$ and $b=m n^{-1}\in \F_\ell$ in this case, then $b^2+b+1=0$.

Let $I$ be a left $\cO$-ideal of reduced norm $\ell$ different from $\cO(m+n\epsilon)$ and $\cO(m+n\bar{\epsilon}) $ if $\ell \equiv 1 \bmod 3$. By Theorem~\ref{theorem:21}, the set of the $\ell+1$ left $\cO$-ideals of reduced norm $\ell$ is
 \[ X_\ell=\{I,\ I_a=\ell\cO+I(a+\frac{1+i}{2})\mid a\in \F_\ell=\{0,\cdots, \ell-1\} \}. \]
If $\ell\equiv 1\bmod{3}$, by Lemma~\ref{lemma:ell3}, $I_b=\cO (m+n\epsilon)$ and $I_{b^2}=\cO(m+n \bar{\epsilon})$.

We claim that  $I \epsilon=I_0$, $I  \bar{\epsilon}=I_{-1}$ and $I_a \epsilon =I_{-(a+1)^{-1}}$, $I_a  \bar{\epsilon}=I_{-a^{-1}(a+1)}$ if $a\neq 0,-1$. Indeed, from Lemma~\ref{lemma:1}, $\ell \in I$. Then $\ell , \ell \bar{\epsilon} \in I$ and $\ell=\ell  \bar{\epsilon} \epsilon \in I \epsilon$, hence $I_0=I \epsilon+\ell \cO=I\epsilon$. Similarly, $I \bar{\epsilon}=I_{-1}$. For $a\neq 0,-1$ in $\F_{\ell}$, $\epsilon \in \cO$ then $\ell \cO \epsilon \subseteq \ell \cO$, and $I_a\epsilon$ is left-$\cO$ ideal, $\ell \cO \subseteq I_a\epsilon$(likely $\ell \cO \subseteq I_a\bar{\epsilon}$) . Then
 \[ \begin{split} I_a\epsilon &=\ell\cO \epsilon +I(-1+(a+1)\epsilon)= \ell\cO+\ell\cO \epsilon +I(-1+(a+1)\epsilon) \\ &=\ell\cO+I(-1+(a+1)\epsilon) =\ell\cO+I(-(a+1)^{-1}+\epsilon)\\
 &=I_{-(a+1)^{-1}}, \end{split}\]
 \[ \begin{split} I_a\bar{\epsilon} &=\ell\cO\bar{\epsilon}+I((a+1)-a\epsilon)=\ell\cO+\ell\cO\bar{\epsilon}+I((a+1)-a\epsilon)\\
 &=\ell\cO+I((a+1)-a\epsilon)=
 \ell\cO+
 I(-(a+1)a^{-1}+\epsilon)\\
 &=I_{-a^{-1}(a+1)}, \end{split}\] where the second identity is by Lemma~\ref{lemma:fact}.

To summarize, we  divide $X_\ell$ into  $[\frac{\ell+2}{3}]$ subsets, each consisting of $2$ or $3$ elements in the same ideal class: $\{I, I_0, I_{-1} \}$, $\{ I_a, I_{-(a+1)^{-1}}, I_{-a^{-1}(a+1)} \}$  $(a^2+a+1 \neq 0, 1)$ and $\{I_b, I_{b^2}\}$ for $b^2+b+1=0$. We show that any two left ideals in different subsets are not in the same ideal class by contradiction.

Suppose $I$ and $J$ are from different subsets of $X_\ell$ and $I=J\mu$ for some $\mu \in B_{p,\infty}$, then $\mu\notin \{\pm 1, \pm \epsilon, \pm \bar{\epsilon} \}$ and $\Nrd(\mu)=1$.
Since $\ell \in J$, $\ell \mu \in I\subseteq \cO$, and $\mu\in \ell^{-1}\cO$. By Lemma~\ref{lemma:ell4}(1), we have $\ell\equiv 1\bmod{3}$,  $\mu=\frac{A+B\epsilon}{\ell}$, $A^2+AB+B^2=\ell^2$ and $\ell\nmid A$. This means that $A+B\epsilon=u(m+n\frac{1\pm i}{2})^2$ for $u\in \{\pm 1,\pm \epsilon, \pm \bar{\epsilon} \}$, and thus $\gcd(A+B\epsilon,\ell)$ in $\Z[\frac{1+\sqrt{-3}}{2}]$ is  $u  (m+n\frac{1\pm i}{2})$. In particular $ I_{b}=\cO(m+n\epsilon)\subseteq I$ or $ I_{b^2}=\cO(m+n\bar{\epsilon})\subseteq I$  and hence $I_{b}=I$ or $I_{b^2}=I$ as both are of the same reduced norm.  Switch the role of $I$ and $J$, we  get $J=I_{b}$ or $J=I_{b^2}$. Hence both $I$ and $J$ are in the same subset $\{I_b, I_{b^2}\}$, impossible. By Deuring's Theorem (Theorem~\ref{theorem:22}), and from Theorem~\ref{theorem:loop}, when $\ell \equiv 2 \bmod 3$, none of the subsets consist of
ideals corresponding to endomorphisms. This completes the proof of the first part of  Theorem~\ref{theorem:main}(2).

For the second part, let $E$ be a supersingular elliptic curve defined over $\F_p$ such that $E_{0}$ connects to $E$ via a left $\cO$-ideal of reduced norm $\ell$. By \cite[Proposition 2.4]{dg}, a supersingular elliptic curve is defined over $\F_p$ if and only if $\Z[\sqrt{-p}]$ is contained in its endomorphism ring. Then $\End(E)=\cO_R(I)\subseteq \ell^{-1}{\cO}$ has an element $s$ such that $s^2=-p$. By Lemma~\ref{lemma:ell4}(2),  we know either $s\in \{\pm j, \pm \epsilon j, \pm \bar{\epsilon} j\}$ or in the case $\ell \equiv 1 \bmod 3$, $\ell s=x j+y \epsilon j$, $(x,y)\in \Z^2$ such that $\ell\nmid x$ and $x^2++xy+y^2=\ell^2$.

Let $\hat{\cO}=\cO/\ell\cO$. Then $\hat{\cO}$ is a quaternion algebra over $\F_\ell$. We can identify $\hat{\cO}$ with $M_2(\F_\ell)$ via the isomorphism $\theta$ in Theorem~\ref{theorem:21} with $q=3$. Moreover, the set \[ \overline{X}_\ell=\{\hat{I}_\infty= M_2(\F_\ell) \begin{pmatrix} 0 &0  \\ 0 & 1 \end{pmatrix},\ \hat{I}_a :=M_2(\F_\ell) \begin{pmatrix} 1 & 2a+1  \\ 0 & 0 \end{pmatrix}\ (a\in \F_\ell)\} \]
corresponds to $X_\ell$ bijectively. For our convenience, the form of the set $\overline{X_\ell}$ here is different from that in the proof of Theorem~\ref{theorem:21}.
Let  $I_a$ be the left $\cO$-ideal of reduced norm $\ell$ corresponding to  $\hat{I}_a$.
For $s\in \cO$, let $\hat{s}$ be the image of $s$ in $M_2(\F_\ell)$. By abuse of notation, write $i$, $j$, $k$ for $\hat{i}$, $\hat{j}$ and $\hat{k}$.

If $(\frac{-p}{\ell})=1$, let $t\in \F_\ell$  such that $t^2= -p$ and let $(u,v)=(t,0)$. In this case, $I_\infty=\cO\ell+\cO(-t+j)$, $I_a=\cO\ell+\cO(-t+j)(a+\epsilon)$. Then one can easily check that $\hat{I}_\infty j\subset \hat{I}_\infty$,  $\hat{I}_{-2^{-1}} j\subset \hat{I}_{-2^{-1}}$ and $\hat{I}_a j\nsubseteq \hat{I}_a$ for all other $a$, this means $j\in \cO_R(I_\infty)$, $j\in \cO_R(I_{-2^{-1}})$ but $j\notin \cO_R(I_a)$ for other $a$. Similarly $\epsilon j\in \cO_R(I_{0}), \cO_R(I_{-2})$ and $\epsilon j \notin \cO_R(I_a)$ for other $a$. Also, $\bar{\epsilon} j\in \cO_R(I_{-1}), \cO_R(I_{1})$ and $\bar{\epsilon} j \notin \cO_R(I_a)$ for other $a$. Now if $\ell\equiv 1\bmod{3}$ and $(x,y)$ any solution that $x^2+xy+y^2=\ell^2$ and $\ell\nmid x$, then one can check $\hat{I}_a (\hat{x}j+\hat{y}\epsilon j)\neq 0$ if $a^2+a+1 \ne 0$, hence $\cO (xj+y\epsilon j )\nsubseteq \ell \cO$ and $\ell^{-1}(xj+y\epsilon j)\notin O_R(I_a)$ if $a^2+a+1 \ne 0$. If $a^2+a+1=0$, then $I_a=\cO(m+n\epsilon)$ or $I_a=(m+n\bar{\epsilon})$ for $m^2+mn+n^2=\ell$, corresponding to the loops. In conclusion, there are two vertices defined over $\F_p$  adjacent to $[E_{0}]$, one corresponding to the ideal class $[I_\infty]=[I_{0}]=[I_{-1}]$ and the other corresponding to the ideal class $[I_{1}]=[I_{-2^{-1}}]=[I_{-2}]$.

If $(\frac{-p}{\ell})=-1$, then $uv\neq 0$ for any solution $(u,v)$ of $X^2+3Y^2=-p$. It is easy to check $\begin{psmallmatrix} 0 &0  \\ 0 & 1 \end{psmallmatrix} j\notin \hat{I}_\infty$. For $a\in \F_\ell$, $\begin{psmallmatrix} 1 &a  \\ 0 & 0 \end{psmallmatrix} j \in \hat{I}_a$ implies that $2(2a+1)u=(3-(2a+1)^2) v$. From $v\neq 0$, then $2a+1\neq 0$ and $u=\frac{3-(2a+1)^2}{2(2a+1)} v$. Hence $-p=\frac{(3+(2a+1)^2)^2}{(2(2a+1))^2} v^2$, impossible. This means $j\notin I_a$ for all $a\in \F_\ell\cup \{\infty\}$. Similarly $\epsilon j, \bar{\epsilon} j \notin I_a$ for all $a\in \F_\ell\cup \{\infty\}$.  Also, if $x,y\ne0$ such that $\begin{pmatrix} 1 & 2a+1  \\ 0 & 0 \end{pmatrix} \begin{pmatrix} 2x+y & -3y  \\ y & 2x+y \end{pmatrix} \begin{pmatrix} u & 3v  \\ v & -u \end{pmatrix} =0 $, by computation, $a^2+a+1=0$, which corresponds to the loops. In conclusion, there is no  vertex defined over $\F_p$ other than $[E_{0}]$.
\end{proof}

\section{Numerical Evidence} \label{sec:num}

For a fixed prime $\ell>3$, let $P_1(\ell)$ (resp. $P_2(\ell)$) be the largest prime $p$ such that the number of vertices adjacent to $[E_{1728}]$ (resp. $[E_0]$) in $\cG_\ell(\F_{p^2},-2p)$ is smaller than  $\frac{1}{2}\bigl(\ell-(-1)^{\frac{\ell-1}{2}}\bigr)$ (resp. $\frac{1}{3}(\ell-(\frac{\ell}{3}))$), i.e., our main theorem fails for such a $p$.  Let $P'_1(\ell)$ (resp. $P'_2(\ell)$) be the largest prime $p$ such that  $p\equiv 3 \bmod 4$  and  $p<4\ell^2$  (resp.  $p\equiv 2 \bmod 3$  and $p<3\ell^2$).
By Theorem~\ref{theorem:main},  $P_i(\ell)\leq P'_i(\ell)$. The equality $P_1(\ell)= P'_1(\ell)$ (resp. $P_2(\ell)= P'_2(\ell)$) holds only when our bound $4\ell^2$ (resp. $3\ell^2$) is sharp, in this case  we say Bound I (resp. Bound II) is satisfied for $\ell$.

We compute the  values of $P_{1}(\ell)$ and $P_{2}(\ell)$ for  $5\le \ell \leq 200$ and list them in Table 1.

\begin{table}[!htbp]
\caption{The values of $P_1(\ell)$ and $P_2(\ell)$ for  $5\le \ell \le 200$}
\tiny
 \begin{tabular}{rrrrrrrrrrrrrrrrrrrrr|}
     \hline
     \hline
       $\ell $ & 5 & 7 & 11 & 13  & 17 & 19 & 23 & 29 & 31 & 37 & 41  \\
       \hline \hline
       $P_1(\ell)$   & 83 & 191 & 479 & 659 & 1151 & 1439 &2111 &3359 & 3803 & 5471 & 6719  \\ \hline
       $P_2(\ell)$  & 47& 71 & 311 & 479 & 839 & 1031 & 1559 & 2447 & 2711 & 4079 & 4967    \\ \hline
       Bound  &  I &  I &  I  & $\times$ &  I  &  I &  I  &  I  & $\times$ &  I  & I   \\ \hline
 \hline \hline

$\ell $ & 43 & 47 & 53 & 59 & 61  & 67 & 71 & 73 & 79 & 83 & 89   \\
        \hline \hline
       $P_(\ell)$ & 7351 & 8831 & 11171 & 13907 & 14879  &17939 & 20147 & 21227 & 24923 &27551 &31667   \\ \hline
       $P_2(\ell)$  & 5519 &6599 & 8231 & 10391 & 11087  & 13259 & 14951 & 15959 & 18671 & 20639 &23687   \\ \hline
       Bound  &  I &  I,II &  I  & I &  I  &  I &  I  &  $\times$  & $\times$ &   I   & I  \\ \hline
\hline\hline
       $\ell $ & 97 & 101  & 103 & 107 & 109 & 113 & 127 & 131 & 137 & 139 & 149  \\
       \hline \hline
       $P_1(\ell)$  & 37619 & 40787 & 42407 & 45779 & 47507 & 51071 & 64499 &68639 & 75011 &77279 &88799  \\ \hline
       $P_2(\ell)$ & 28151 & 30491 &31799 &34319 & 35591 & 38231 & 48311 &51431 &56099 &57839 &66491 \\ \hline
       Bound  &   I  &   I  &   I   &  I  &   I   &   I  &  I   &  I   &  I  &   I   &  I   \\ \hline
       \hline\hline

     $\ell $ &151 &157&163&167&173&179 &181 & 191 & 193 & 197 &199 \\
\hline \hline
       $P_1(\ell)$ &91199 & 98543 &106187 &111539 & 119699 & 128159 & 130927 &145879 & 148991 & 155231 & 158363 \\ \hline
       $P_2(\ell)$   & 68351 & 73823 & 79631 & 83639 & 89759 & 95819 & 98207 & 109391 & 111623 & 116351 &118751  \\ \hline
       Bound  &   I  &  $\times$ &  $\times$  &  I  &  I   &  I  &  $\times$  &  $\times$  &  I  &   I   &  I   \\ \hline

\end{tabular}
\end{table}

As can be seen from Table 1, of the $44$ primes between $5$ and $200$, Bound I is satisfied for $36$  primes. The prime $47$ is the only $\ell<200$ satisfying Bound II (and also Bound I), but the difference  $P'_2(\ell)-P_2(\ell)$ for each $\ell$ is not  big. In this sense  our bounds are sharp.

\section{The cases when $\ell =2$ and $3$} \label{sec:extra}
For completeness, we list results here for the cases $\ell=2$ or $3$.

(1) For the curve $E_{1728}$ (hence $p\equiv 3\bmod{4}$),
\begin{itemize}
 \item[$\ell=2$] If $p>4\ell=8$, then $[E_{1728}]$ has $1$ loop by \cite[Theorem 10]{aam}, and if $I$ is non-principal of reduced norm $\ell$, then $[I]=[I_0]$,  $[E_{1728}]$ connects to another vertex by $2$ edges; if $p=7$, then $\Phi_2(X,1728)\equiv (X-1728)^3 \bmod 7$,  $[E_{1728}]$ has $3$ loops.

 \item[$\ell=3$] If $p>4\ell=12$, then $E_{1728}$ has no loop, and the $4$ edges correspond to $2$ ideal classes, $[E_{1728}]$ connects to $2$ other vertices by $2$ edges each; if $p=7$, then $\Phi_3(X,1728)\equiv (X+1)^4 \bmod 7$, $E_{1728}$ connects to another vertex by $4$ edges; if $p=11$, then $\Phi_3(X,1728)\equiv (X^2+X+10)^2 \bmod 11$, which means $E_{1728}$ connects to $2$ vertices by $2$ edges each.
\end{itemize}

(2) For the curve $E_{0}$ (hence $p\equiv 2\bmod{3}$),
\begin{itemize}
 \item[$\ell=2$] If $p>3\ell=6$, then $E_{0}$ has no loop by \cite{OX} and $I,I_0,I_{-1}$ are in the same ideal class, which means $[E_{0}]$ connects to another vertex by $3$ edges; if $p=5$, then $\Phi_2(X,0)\equiv X^3 \bmod 5$, which means $[E_{0}]$ has $3$ loops.

 \item[$\ell=3$] If $p>3\ell=9$, then $[E_{0}]$ has $1$ loop by \cite{OX} and $I,I_0,I_{-1}$ are in the same ideal class, which means $[E_{0}]$ connects to another vertex by $3$ edges; if $p=5$, then $\Phi_3(X,0)\equiv X^4 \bmod 5$, which means $[E_{0}]$ has $4$ loops.
\end{itemize}

\subsection*{Acknowledgement}
Research is partially supported by Anhui Initiative in Quantum Information Technologies (Grant No. AHY150200) and NSFC (Grant No. 11571328).

\end{document}